\journal{arxiv.org}
\newtheorem{theorem}{Theorem}
\newtheorem{lemma}{Lemma}
\newtheorem{corollary}{Corollary}
\newtheorem{observation}{Observation}
\newtheorem{proposition}{Propositon}
\theoremstyle{definition}
\newtheorem{definition}{Definition}
\theoremstyle{remark}
\newtheorem{example}{Example}
\def\antisorted{\updownharpoons}
\def\notantisorted{\slash\hspace{-1.05em}\updownharpoons}
\newcommand{\R}{{\ensuremath\mathbf R}}
\newcommand{\Z}{{\ensuremath\mathbf Z}}
\newcommand{\Symm}{{\ensuremath\mathfrak S}}
\newcommand{\Oh}{{\ensuremath\mathcal O}}
\newcommand{\NP}{{\ensuremath\mathcal NP}}
\newcommand{\ZPP}{{\ensuremath\mathcal ZPP}}
\newcommand{\Prob}[1]{\mathrm{Pr}\!\left[{#1}\right]}
\newcommand{\dropped}[2]{#1^{[#2]}}
\DeclareMathOperator{\VaR}{VaR}
\DeclareMathOperator{\dist}{dist}
\DeclareMathOperator{\argmin}{argmin}
\begin{document}
\begin{frontmatter}

\title{Bounding Stochastic Dependence,\\
  Complete Mixability of Matrices, and\\
  Multidimensional Bottleneck Assignment Problems}

\author{Utz-Uwe Haus}
\address{Institute for Operations Research, Dept. of Mathematics, ETH
    Zurich, Switzerland, uhaus@ifor.math.ethz.ch}
\begin{abstract}
  We call a matrix completely mixable if the entries in its columns
  can be permuted so that all row sums are equal. If it is not
  completely mixable, we want to determine the smallest maximal and
  largest minimal row sum attainable. These values provide a discrete
  approximation of of minimum variance problems for discrete
  distributions, a problem motivated by the question how to estimate
  the $\alpha$-quantile of an aggregate random variable with unknown
  dependence structure given the marginals of the constituent random
  variables. We relate this problem to the multidimensional bottleneck
  assignment problem and show that there exists a polynomial
  $2$-approximation algorithm if the matrix has only $3$ columns. In
  general, deciding complete mixability is $\NP$-complete. In
  particular the swapping algorithm of Puccetti et
  al.~\cite{puccetti-rueschendorf:11} is not an exact method unless
  $\NP\subseteq\ZPP$. For a fixed number of columns it remains
  $\NP$-complete, but there exists a PTAS. The problem can be solved
  in pseudopolynomial time for a fixed number of rows, and even in
  polynomial time if all columns furthermore contain entries from the
  same multiset.
\end{abstract}

\begin{keyword}
  risk aggregation\sep VaR-bounds\sep model uncertainty\sep positive
  dependence\sep bottleneck assignment
\MSC 91B30\sep 05A05\sep 91B30\sep 62P05
\end{keyword}

\end{frontmatter}

\section{Introduction}
\label{sec:introduction}

The problem we are considering is the following:
Given a matrix $A\in\R^{m\times d}$, we are interested in the best way
of permuting entries in each column (independently) so that the
maximal row sum is minimized, or so that the minimal row sum is
maximized. Given $d$ permutations $\Pi=(\pi_1,\dots,\pi_d)\in\Symm(m)^d$ we
denote by $A^\Pi$ the matrix obtained from $A$ by permuting column $j$
by $\pi_j$, i.e.~$A^\Pi_{i,j}=A_{\pi^{-1}_j(i),j}$. The optimization
problem is then

\begin{equation}
  \label{eq:gamma}
  \gamma(A):=\min_{\Pi\in\Symm(m)^d} \max_{1\leq i\leq m}\left\{\sum_{j=1}^d A^\Pi_{i,j}\right\}
\end{equation}
and
\begin{equation}
  \label{eq:beta}
  \beta(A):=\max_{\Pi\in\Symm(m)^d} \min_{1\leq i\leq m}\left\{\sum_{j=1}^d A^\Pi_{i,j}\right\}.
\end{equation}

We
note that aggregation operations other than $+$ are conceivable (e.g.,
$\min,\max,\times$), but will not be treated here.

This problem is motivated by an application in quantitative finance,
but in fact arises whenever one needs to estimate the influence of
stochastic dependence on a statistical problem: Consider an aggregate
random variable $L$ of the form $L=\sum_{i=1}^dL_i$, where the random
variables $L_i$ are possibly not independent. Denote by
$F_L(x)=P(L\leq x)$ the distribution function of $L$. We are
interested in computing the $\alpha$-quantile (Value-at-Risk, $\VaR_\alpha$)
$F^{-1}_L(\alpha)=\inf\{x\in\R\::\: F_L(x)\geq\alpha\}$, for
$\alpha\in(0,1)$. Often we have no data on the joint distribution $L$,
but only on the marginal distributions $F_j$ of the constituent random
variables $L_j$, and we also lack information on the dependence
structure between them. 

In the following we will assume that the marginal distributions are
discrete, or have been approximated from below and from above as
described in~\cite{puccetti-rueschendorf:11}: For $F_i$ the
generalized inverse is 
$F^{-1}_j(\alpha)=\sup\{x\in\R\::\: F_j(x)\leq \alpha\}$. Consider a
discretization in $N+1$ points. Compute the values $q^j_r=F^{-1}_j(r/N)$
for $r\in\{0,1,\dots,N\}$. Denoting by $1_{[a,b)}$ the characteristic function
on the interval $[a,b)$,
$$\underline{F_j}(x)=\frac{1}{N}\sum_{r=0}^{N-1}1_{[q_r^j,+\infty)}(x)
\text{ and }
\overline{F_j}(x)=\frac{1}{N}\sum_{r=1}^{N}1_{[q_r^j,+\infty)}(x),
$$
provide discrete approximations of $F_j$ with $\underline{F_j}\geq
F_j\geq\overline{F_j}$. 

Dependence among the individual $F_j$ will manifest itself in the
way the values $q^j_r=F^{-1}_j(r/N)$ are appearing in the matrix
$
A=\begin{pmatrix}
  q^1_0&\cdots&q^d_0\\
  \vdots &    &\vdots\\
  q^1_N&\cdots&q^d_N\\
\end{pmatrix}.
$
In particular, the row sums may vary significantly: Consider $d=2$ and
the uniform discrete distribution on $\{0,\dots,N\}$. If $L_1$ and
$L_2$ are comonotonic (i.e.~there is perfect positive dependence among
the random variables), then $(q^1_0,\dots,q^1_N)=(q^2_0,\dots,q^2_N)$
with row sums $\{0,2,\dots,2N\}$. If, on the other hand, $F_1$ and
$F_2$ are countermonotonic (perfect negative dependence among the
random variables), then $(q^1_0,\dots,q^1_N)=(q^2_N,\dots,q^2_0)$, and
all row sums are equal to $N$.  If we want to find an upper bound for
$F_L^{-1}(\alpha)$ we need to consider matrices with entries $q^j_r$
for $\tfrac{r}{N}\geq\alpha$, and for lower bounds matrices
constructed from $q^j_r$ for $\tfrac{r}{N}\leq\alpha$ and each time
minimize the variance of the row sums of $A$. This intuition is made
exact by a representation theorem of Rüschendorf~\cite[Theorem
2]{rueschendorf:83b}, showing that for discrete distribution
functions, and due to the uniform discretization inherent in our
definition of $\underline{F_j}$ and $\overline{F_j}$, solving the
minimum variance problem amounts to determining $\gamma(A)-\beta(A)$
for the matrix $A$, since it is enough to minimize over the set of all
rearrangements of the $F_j$.  We refer
to~\cite{puccetti-rueschendorf:11,puccetti-wang-wang:12,wang-wang:11,puccetti-rueschendorf:11b}
for recent applications and to~\cite{rueschendorf:83b}
and~\cite{rueschendorf:81,day:70:thesis} for more details on the
general concept of rearrangements of functions.

\begin{example}[\cite{embrechts-puccetti-rueschendorf:12}]
  Under the Basel II and III regulatory framework for banking
  supervision, large international banks are allowed to come up with
  internal models for the calculation of risk capital. For operational
  risk the so-called Loss Distribution Approach gives them full
  freedom concerning the stochastic modeling assumptions used. The
  resulting risk capital must correspond to a 99.9\%-quantile of the
  aggregated loss data over a year. This corresponds to computing the
  Value-at-Risk $\VaR_{0.999}(L)$ at $\alpha=0.999$ for an aggregate
  loss random variable $L=\sum_{i=1}^dL_i$, but makes no requirements
  on the interdependence between the individual loss random
  variables $L_i$ corresponding to the indivdual business lines:
  Assumptions made in the calculation must only be plausible and well
  founded.  Estimating the upper bound and lower bound of the VaR over
  all possible dependence structures is hence relevant both from the
  regulator's point of view, as well as from the bank's point of view,
  to estimate worst case hidden risks in the models presented under
  the Loss Distribution Approach.
\end{example}

Besides computing (or approximating) $\gamma(A)$ and $\beta(A)$, one
is also interested in deciding whether for a given matrix
$\gamma(A)=\beta(A)$. We will call such a matrix \emph{completely
  mixable}, in analogy with the definition of this concept by Wang and
Wang~\cite{wang-wang:11} for distribution functions.

In this paper we show that deciding complete mixability is a strongly
$\NP$-complete problem, even for a fixed number of columns, but can be
solved using dynamic programming in pseudopolynomial time for a fixed
number of rows. We show that the algorithm proposed by Puccetti et
al. in~\cite{puccetti-rueschendorf:11} to compute $\gamma(A)$ and
$\beta(A)$ is not an exact method unless $\NP\subseteq\ZPP$, despite
its impressive computational
success~\cite{embrechts-puccetti-rueschendorf:12}. Finally, for
matrices in fixed (column) dimension we present a
polynomial-time approximation scheme.

\section{Complexity}
\label{sec:complexity}

It is known that for two columns the complete mixability problem is
solvable explicitly (see the references
in~\cite{rueschendorf:83}). This is also apparent by recognizing that
the computation of $\gamma(A)$ can be understood as solving a
multidimensional bottleneck assignment problem. The multidimensional
bottleneck assignment problem asks for the computation of
$$\min_{\pi_1,\dots,\pi_d} \max_{1\leq i\leq m} c_{\pi_1(i),\dots,\pi_d(i)}
$$
for a $\underbrace{m\times\dots\times m}_d$ cost table
$C$. Defining
$c_{i_1,\dots,i_d}=A_{i_1,1}+\dots+A_{i_d,d}$ we see that $\gamma(A)$
can be computed by solving a multidimensional bottleneck assignment
problem. Using Observation~\ref{obs:beta-and-gamma} below we can
similarly compute $\beta(A)$ and thus check complete mixability.

In dimension $2$, the bottleneck assignment problem models the
following problem: Given a set of workers and a set of tasks, where the time of
worker $i$ performing task $j$ is $c_{ij}$, find a simultaneous
assignment of all workers to all tasks such that the maximal time
spent by any worker (the bottleneck of the schedule) is
minimized. Fulkerson et al. showed that the $2$-dimensional bottleneck
assignment problem can be transformed into a linear assignment
problem~\cite{fulkerson-glicksberg-gross:53}, and thus is polynomially
solvable.

The multi-dimensional bottleneck assignment problem of assigning
(equal-sized) crews of workers to (equal-sized) groups of tasks is
much harder. Even restricted versions of the 3-dimensional version do
not admit a polynomial time approximation
scheme~\cite{goossens-polyakovskiy-spieksma-woeginger:10}.

By adding $\mu=-\min_{1\leq i\leq m,\\1\leq j\leq d} A_{ij}$ to each
entry of $A$ we can always shift the matrix to make the smallest entry
equal to zero, changing all row sums by $+\mu\cdot d$. For convenience
we will hence restrict our attention to integral, nonnegative
matrices.  Assuming integrality is not a major restriction, since
rational matrices can without loss of generality be scaled to become
integral, and rational matrices provide a dense subset of the real
matrices that could arise in discretizing distribution functions.

First note that $\beta$ and $\gamma$ are related as follows:
\begin{observation}\label{obs:beta-and-gamma}
  Let $A\in\Z^{m\times d}$, and
  $l:=\max_{1\leq i\leq m,1\leq j\leq d}A_{ij}$ its largest
  entry. Define $A'$ by $A'_{ij}=l-A_{ij}$. Then $\beta(A)=d\cdot l -\gamma(A')$.
\end{observation}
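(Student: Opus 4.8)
The plan is to exploit the fact that complementing every entry of $A$ with respect to its largest entry $l$ turns row sums into their "reflections" and simultaneously turns $\min$ into $\max$. First I would fix an arbitrary tuple of permutations $\Pi=(\pi_1,\dots,\pi_d)\in\Symm(m)^d$ and compute, for each row $i$, the row sum of $(A')^\Pi$. Since $(A')^\Pi_{i,j}=l-A^\Pi_{i,j}$ entrywise (permuting columns commutes with the entrywise affine map $x\mapsto l-x$), the $i$-th row sum of $(A')^\Pi$ equals $\sum_{j=1}^d (l-A^\Pi_{i,j}) = d\cdot l - \sum_{j=1}^d A^\Pi_{i,j}$. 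In other words, the multiset of row sums of $(A')^\Pi$ is obtained from that of $A^\Pi$ by applying the order-reversing bijection $s\mapsto d\cdot l - s$.

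Next I would push this through the two extremal operations. Because $s\mapsto d\cdot l-s$ is monotone decreasing, we have $\max_i \sum_j (A')^\Pi_{i,j} = d\cdot l - \min_i \sum_j A^\Pi_{i,j}$ for every fixed $\Pi$. Taking the minimum over $\Pi$ on the left and using that minimizing $d\cdot l - (\cdot)$ is the same as $d\cdot l$ minus the maximum, I get
\begin{equation*}
  \gamma(A') = \min_{\Pi}\max_i \sum_j (A')^\Pi_{i,j}
             = \min_{\Pi}\Bigl(d\cdot l - \min_i \sum_j A^\Pi_{i,j}\Bigr)
             = d\cdot l - \max_{\Pi}\min_i \sum_j A^\Pi_{i,j}
             = d\cdot l - \beta(A).
\end{equation*}
Rearranging gives $\beta(A) = d\cdot l - \gamma(A')$, as claimed.

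This argument is essentially a bookkeeping exercise, so I do not anticipate a genuine obstacle; the only point requiring a moment's care is the very first one, namely that applying a column permutation to $A'$ yields the entrywise complement of $A^\Pi$ rather than something more complicated. This follows directly from the definition $A^\Pi_{i,j}=A_{\pi_j^{-1}(i),j}$: substituting $A'_{i,j}=l-A_{i,j}$ gives $(A')^\Pi_{i,j}=A'_{\pi_j^{-1}(i),j}=l-A_{\pi_j^{-1}(i),j}=l-A^\Pi_{i,j}$, so the same tuple $\Pi$ witnesses both sides and the correspondence between feasible solutions of the two problems is a bijection. Everything else is the monotonicity of $s\mapsto d\cdot l-s$ interchanging $\min$ and $\max$.
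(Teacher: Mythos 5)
Your proof is correct and is precisely the intended argument: the paper states this as an observation without proof, and the entrywise complementation $A'_{ij}=l-A_{ij}$ turning each row sum $s$ into $d\cdot l-s$ (hence swapping $\min$ and $\max$ at both the row level and the permutation level) is exactly the bookkeeping the author has in mind. Your care in checking that $(A')^\Pi_{i,j}=l-A^\Pi_{i,j}$ for the same $\Pi$ is the one point worth making explicit, and you made it.
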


Hence we only ever need to consider one of the two values. To see that
deciding complete mixability of $A$ and computing $\beta$ or $\gamma$
are actually polynomially equivalent we only need the following
obvious necessary condition that will also prove useful later on.

\begin{observation}\label{obs:rowsum}
  Let $A\in\Z^{m\times d}$. $A$ is completely mixable if and only if
  $\gamma(A)=\beta(A)=\tfrac{1}{m}\sum_{i=1}^m\sum_{j=1}^d A_{ij}$.
\end{observation}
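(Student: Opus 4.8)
The plan is to argue both implications directly from the definitions of $\gamma$ and $\beta$, using the fact that permuting entries within columns does not change the \emph{total} sum $S:=\sum_{i=1}^m\sum_{j=1}^d A_{ij}$, so that for every $\Pi\in\Symm(m)^d$ the $m$ row sums of $A^\Pi$ add up to $S$. The key numerical observation is that the average of the $m$ row sums is always $S/m$, hence the maximum row sum is at least $S/m$ and the minimum row sum is at most $S/m$, for every $\Pi$. Taking the minimum over $\Pi$ in the first inequality and the maximum over $\Pi$ in the second gives $\gamma(A)\ge S/m$ and $\beta(A)\le S/m$ unconditionally; combined with the trivial $\gamma(A)\ge\beta(A)$ (the max of a nonempty finite set is at least its min), we get $\gamma(A)\ge S/m\ge\beta(A)$ always.

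For the forward direction, suppose $A$ is completely mixable, i.e.\ $\gamma(A)=\beta(A)$. Pick $\Pi$ realizing complete mixability, so all row sums of $A^\Pi$ are equal to some common value $c$; then $mc=S$, so $c=S/m$. Since this $\Pi$ witnesses $\max_i(\cdots)=c$ it shows $\gamma(A)\le S/m$, and since it witnesses $\min_i(\cdots)=c$ it shows $\beta(A)\ge S/m$; together with the unconditional reverse inequalities from the previous paragraph we conclude $\gamma(A)=\beta(A)=S/m$. For the converse, if $\gamma(A)=\beta(A)$ then by definition $A$ is completely mixable (that is precisely the definition given just above the statement), and the value is then forced to be $S/m$ by the averaging argument; alternatively, if one only assumes the equality $\gamma(A)=\beta(A)=S/m$, take $\Pi$ attaining $\gamma(A)=S/m$, note its maximum row sum equals $S/m$ while its row sums average to $S/m$, so no row sum can be strictly below the average either — hence all row sums equal $S/m$ and $\Pi$ witnesses complete mixability.

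There is essentially no obstacle here; the only thing to be careful about is bookkeeping which inequality is unconditional and which requires the mixing permutation, and making sure the equality in the statement is read as an equality of all three quantities $\gamma(A)$, $\beta(A)$, and $\tfrac{1}{m}S$ simultaneously. I would present it as: (i) the averaging bound $\gamma(A)\ge \tfrac1m S\ge\beta(A)$ always holds; (ii) complete mixability forces equality throughout by exhibiting a permutation with all row sums $\tfrac1m S$; (iii) equality throughout forces all row sums of an optimal $\Pi$ to equal the average, i.e.\ complete mixability. This also makes transparent the claim in the surrounding text that deciding complete mixability and computing $\beta$ or $\gamma$ are polynomially equivalent, since Observation~\ref{obs:beta-and-gamma} already reduces $\beta$ to $\gamma$ and the present statement reduces the mixability decision to a single comparison of $\gamma(A)$ with the easily computed quantity $\tfrac1m S$.
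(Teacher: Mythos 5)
Your proof is correct and is exactly the averaging argument the paper implicitly relies on (the paper states this as an ``obvious'' observation and gives no proof at all). The unconditional chain $\gamma(A)\geq\tfrac{1}{m}\sum_{i,j}A_{ij}\geq\beta(A)$ together with the two directions you spell out is the right and essentially only way to justify it.
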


It turns out that this is sufficient for showing linear time
decidability of complete mixability if the entries of $A$ are
restricted to at most two values: Those can be mapped to $\{0,1\}$,
and then the algorithm used in the proof below provides a linear time
check for complete mixability:

\begin{theorem}\label{lem:01cm}
  Let $A\in\{0,1\}^{m\times d}$. A is completely mixable if and only
  if $m\mid \sum_{1\leq i\leq m,1\leq j\leq d} A_{ij}$. The permutation achieving the
  complete mix can be computed in linear time $\Oh(m\cdot d)$.
\end{theorem}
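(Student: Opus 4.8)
The plan is to prove both directions via the necessary row-sum condition from Observation~\ref{obs:rowsum}. Necessity is immediate: if $A$ is completely mixable then by Observation~\ref{obs:rowsum} the common row sum equals $\tfrac{1}{m}\sum_{i,j}A_{ij}$, which forces $m\mid\sum_{i,j}A_{ij}$ since all entries, hence all row sums, are integers. The substance is the converse, and here I would argue by giving an explicit constructive algorithm and verifying it runs in time $\Oh(m\cdot d)$.

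For the construction, write $S=\sum_{i,j}A_{ij}$ and suppose $m\mid S$, so the target common row sum is $k:=S/m$, an integer with $0\le k\le d$. The key observation is that for a $0/1$ matrix, column $j$ is completely described by its number of ones, say $s_j=\sum_i A_{ij}$, and permuting within a column just means choosing \emph{which} rows receive the ones of that column; since $\sum_j s_j=S=mk$, we are exactly asking: can we place $s_j$ ones into the $m$ cells of column $j$, for each $j$, so that every row ends up with exactly $k$ ones? This is a bipartite degree-constrained assignment (rows want degree $k$, columns have prescribed degree $s_j$, all $s_j\le m$), which always has a solution — a clean way to produce one is the cyclic ``round-robin'' assignment: fix a linear order of the row indices $0,1,\dots,m-1$ and a running pointer $p$ initialized to $0$; process the columns in any order, and for column $j$ place its $s_j$ ones in rows $p, p+1,\dots,p+s_j-1$ (indices mod $m$), then advance $p\mathrel{+}=s_j\pmod m$. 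I would then verify the invariant that after processing the first $t$ columns, the multiset of partial row sums consists only of the values $\lfloor (\sum_{j\le t}s_j)/m\rfloor$ and $\lceil(\sum_{j\le t}s_j)/m\rceil$, with the ``high'' rows forming a contiguous cyclic block ending just before $p$; when $t=d$ the total is $mk$, so both values coincide with $k$ and every row sum is exactly $k$.

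Turning this into the permutations $\pi_j$ required by the statement is routine bookkeeping: in column $j$ the ones occupy a known set of $s_j$ target rows and the zeros the complementary rows, so any bijection from the original positions of the ones (resp. zeros) in column $j$ to those target rows (resp. the complementary rows) defines $\pi_j$. Each column costs $\Oh(m)$ time to emit its assignment and update the pointer, for a total of $\Oh(m\cdot d)$, which also dominates the cost of reading the input; computing the $s_j$ and $S$ is likewise $\Oh(m\cdot d)$, so the complete-mixability test ``$m\mid S$'' is decided within the same bound.

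The main obstacle I anticipate is purely in the write-up rather than the mathematics: stating the cyclic-block invariant precisely enough that the final ``all row sums equal $k$'' conclusion is immediate, while keeping the modular index arithmetic (the wrap-around of the pointer $p$) unambiguous. Once the invariant is phrased correctly the induction on $t$ is one line per case, and correctness of the emitted $\pi_j$ is by inspection.
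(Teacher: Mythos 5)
Your proof is correct, but it takes a genuinely different route from the one in the paper. The paper's argument is a local repair procedure on the given matrix: it defines a per-row \emph{defect} $\delta(i)=r-\sum_j A_{ij}$, sweeps over the columns swapping entries between rows of positive and negative defect, and proves termination with zero total defect by contradiction (if some defect survives the last column, a swap must have been missed earlier). You instead discard the original positions entirely, observe that a $0/1$ column is determined up to permutation by its number of ones $s_j$, and rebuild the matrix from scratch by a cyclic round-robin placement, with the loop invariant that the partial row sums always take only the two values $\lfloor T_t/m\rfloor$ and $\lceil T_t/m\rceil$ with the high rows forming a contiguous cyclic block ending at the pointer. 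I checked the invariant in both cases ($s_{t+1}\leq m-r$ and $s_{t+1}>m-r$, where $r=T_t\bmod m$) and it goes through, so your construction is valid and runs in $\Oh(m\cdot d)$ as claimed; your necessity direction via Observation~\ref{obs:rowsum} matches the paper's. What each approach buys: your round-robin argument is arguably cleaner and makes the ``row sums differ by at most one'' property explicit at every stage, which immediately gives the optimal maximal row sum $\lceil S/m\rceil$ even when $m\nmid S$; the paper's defect-correction version has the same by-product (it is remarked after the theorem) but obtains it less transparently, while having the advantage of operating in place on the given matrix and foreshadowing the swapping heuristic analyzed later in the paper.
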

\begin{proof}
  ``$\Rightarrow$'' Let $s=\sum_{i=1}^m\sum_{j=1}^d A_{ij}$. If $m \nmid s$ then $A$
  cannot be completely mixable.

  ``$\Leftarrow$'' Assume $m\mid \sum_{1\leq i\leq m,1\leq j\leq d}
  A_{ij}$. We need to permute the
  columns of $A$ such that exactly $\tfrac{s}{m}\in\{0,\dots,d\}=r$
  entries in each row have value $1$.
  
  This can always be done: Define for $i\in\{1,\dots,m\}$
  the \emph{defect} $\delta(i)=r -
  \sum_{j=1}^dA_{ij}$ and $\phi=\sum_{i=1}^m|\delta(i)|$ the total
  defect. Clearly, $\phi=0$ if and only if all row sums of the matrix
  are equal to $r$.

  Starting with $j=2$ define 
  $S_j=\{i\in\{1,\dots, m\}\;:\; \delta(i)>0, A_{ij}=1\}$
  and 
  $D_j=\{i\in\{1,\dots,m\}\;:\; \delta(i)<0,A_{ij}=0\}.$
  If $S_j\neq\emptyset$ and
  $D_j\neq\emptyset$ let $t_j=\min\{|S_j|,|D_j|\}$ and swap the entries of
  column $A_{\cdot j}$ indexed by the largest $t_j$ entries of $S_j$
  with those indexed by the smallest $t_j$ entries of $D_j$. Repeat
  in increasing order, for all $j\leq d$.

  Clearly, throughout the procedure the defect of rows with positive
  defect can only decrease, and the defect of rows with negative
  defect can only increase; the total defect decreases by $2t_j>0$ for
  each swap. Assume that the procedure stops in the last column with a
  matrix that has nonzero total defect $\phi$. Then there must be a
  row $i_1$ with positive defect $\delta_{i_1}$ and a row $i_2$ with
  negative defect $\delta_{i_2}$, since $r=s/m$. Consider some column
  index $l$ such that $A_{i_1 l}=1$ and $A_{i_2 l}=0$. Then the index
  $i_1$ was in $S_l$, and $i_2$ was in $D_l$ (because the absolute
  defects of the rows can only have decreased in later steps), but
  they were not swapped, a contradiction.
\end{proof}

Note that when the algorithm declares $A$ `not completely mixable', it
has computed a permutation achieving maximal row sum.  

We note in passing that if $A\in\Z^{m\times d_1}$ and $B\in\Z^{m\times
  d_2}$ are completely mixable, then so is $-A$ and $(A
B)\in\Z^{m\times (d_1+d_2)}$. A more interesting composition is the
following:
\begin{proposition}[glueing of completely mixable matrices]\label{prop:glueing}
  Let $A\in\R^{m_1\times d_1}$ and $B\in\R^{m_2\times d_2}$ be
  completely mixable matrices that have been permuted to each have
  equal row sums. Then the matrix 
  $$\textstyle
  A\oplus B = (C_{ij})_{{1\leq i \leq m_1m_2}\atop{1\leq j\leq d_1d_2}}
  $$
  with $C_{m_2(i-1)+k,d_2(j-1)+l}=A_{ij}+B_{kl}$ (i.e., the block
  matrix constructed by replacing every entry $A_{ij}$ of $A$ by a
  block $(A_{ij}+B_{kl})_{{1\leq k\leq m_2}\atop{1\leq l\leq d_2}}$)
  is completely mixable.
\end{proposition}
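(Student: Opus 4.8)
The plan is to show that, thanks to the hypothesis that $A$ and $B$ have already been rearranged to have constant row sums, the matrix $A\oplus B$ \emph{automatically} has constant row sums, so the identity permutation already realizes a complete mix and nothing further needs to be optimized. Concretely, since $A$ is completely mixable and in the given form every row of $A$ sums to the same value, Observation~\ref{obs:rowsum} identifies that value as $r_A:=\tfrac{1}{m_1}\sum_{i,j}A_{ij}$; similarly every row of $B$ sums to $r_B:=\tfrac{1}{m_2}\sum_{k,l}B_{kl}$.

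Next I would simply evaluate a row sum of $C:=A\oplus B$. The row of $C$ with index $m_2(i-1)+k$ consists exactly of the entries $A_{ij}+B_{kl}$ as $(j,l)$ ranges over $\{1,\dots,d_1\}\times\{1,\dots,d_2\}$, so
\[
\sum_{j=1}^{d_1}\sum_{l=1}^{d_2}\bigl(A_{ij}+B_{kl}\bigr)
=d_2\sum_{j=1}^{d_1}A_{ij}+d_1\sum_{l=1}^{d_2}B_{kl}
=d_2\,r_A+d_1\,r_B .
\]
The right-hand side is independent of $i$ and $k$, hence all $m_1m_2$ rows of $C$ share this common sum; since the total entry sum of $C$ is easily checked to be $m_1m_2(d_2r_A+d_1r_B)$, Observation~\ref{obs:rowsum} applied to $C$ gives $\gamma(C)=\beta(C)=d_2r_A+d_1r_B$, i.e.\ $C$ is completely mixable.

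I do not expect a genuine obstacle here: the only thing that needs a moment's attention is checking that $(i,k)\mapsto m_2(i-1)+k$ and $(j,l)\mapsto d_2(j-1)+l$ are bijections onto $\{1,\dots,m_1m_2\}$ and $\{1,\dots,d_1d_2\}$, so that ``the row indexed by $m_2(i-1)+k$'' and ``the column indexed by $d_2(j-1)+l$'' sweep out all rows and columns of $C$ exactly once; after that the displayed one-line computation is the whole argument. The interest of the statement lies not in the difficulty of the proof but in the fact that $\oplus$ is a convenient way to build larger completely mixable matrices, and one may additionally record, at essentially no extra cost, how the multiset of entries in each column of $C$ arises as a sumset of one column-multiset of $A$ and one of $B$.
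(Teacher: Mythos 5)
Your proof is correct and is essentially identical to the paper's: both compute the row sum of $C=A\oplus B$ as $d_2\sigma_A+d_1\sigma_B$, observe it is independent of the row index, and conclude complete mixability from the constancy of the row sums. The extra remarks about the index bijections and Observation~\ref{obs:rowsum} are harmless elaborations of the same one-line argument.
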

\begin{proof}
  Since $A$ and $B$ have identical row sums $\sigma_A$ and $\sigma_B$
  (respectively), the row sum of $C$ is always
  $d_2\cdot\sigma_A+d_1\cdot\sigma_B$, showing complete mixability of
  $C$.
\end{proof}

\smallskip
In general checking complete mixability is hard:
\begin{theorem}\label{thm:np-completeness}
  It is strongly $\NP$-complete to decide whether an integral matrix
  $A\in\Z^{m\times d}$ is completely mixable. It remains strongly
  $\NP$-complete for fixed $d$, and at least weakly $\NP$-complete for
  fixed $m$.
\end{theorem}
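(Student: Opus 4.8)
The plan is to prove membership in $\NP$ and then give number-preserving reductions, one for each assertion. Membership is immediate: a tuple of permutations $\Pi=(\pi_1,\dots,\pi_d)\in\Symm(m)^d$ is a certificate, and by Observation~\ref{obs:rowsum} one only has to check in $\Oh(md)$ arithmetic operations that every row sum of $A^\Pi$ equals $\tfrac1m\sum_{i,j}A_{ij}$; as all reductions below send integers to integers of the same order of magnitude, this also supplies the membership required by the \emph{strong} claims.

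For strong $\NP$-hardness with three columns I would reduce from \emph{Numerical $3$-Dimensional Matching} (N3DM), which is strongly $\NP$-complete: given three $m$-element multisets $X,Y,Z$ of positive integers with $\sum X+\sum Y+\sum Z=mB$, decide whether the elements can be grouped into $m$ triples, one from each multiset, all of sum $B$. Put the elements of $X,Y,Z$ into the three columns of a matrix $A\in\Z^{m\times 3}$ (in any order). As $\Pi=(\pi_1,\pi_2,\pi_3)$ runs over $\Symm(m)^3$, row $i$ of $A^\Pi$ is the triple $(X_{\pi_1^{-1}(i)},Y_{\pi_2^{-1}(i)},Z_{\pi_3^{-1}(i)})$, so the multisets of row sums of $A^\Pi$ are exactly the multisets of triple sums obtained from all ways of forming $m$ triples that use each element of $X$, of $Y$, and of $Z$ once. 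By Observation~\ref{obs:rowsum}, $A$ is completely mixable iff one such family has all triple sums equal to $B$, i.e.\ iff the N3DM instance is a yes-instance. This settles the general case and $d=3$ at once; for any fixed $d>3$ one appends $d-3$ all-zero (or any constant) columns, which adds a fixed constant to every row sum under every $\Pi$ and therefore preserves complete mixability in both directions.

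For the fixed-$m$ part I would reduce from \emph{$m$-way equal-sum partition}: split positive integers $a_1,\dots,a_n$ with $\sum_j a_j=mT$ into $m$ parts of sum $T$ each. This is $\NP$-complete for every fixed $m\ge2$: for $m=2$ it is ordinary \emph{Partition}, and for $m\ge3$ it reduces from \emph{Partition} by adjoining $m-2$ copies of half the total, each of which is forced into a singleton part since it already meets the target and all remaining elements are strictly positive. Given such an instance let $A\in\Z^{m\times n}$ have its $j$-th column equal to the vector whose first coordinate is $a_j$ and whose other $m-1$ coordinates are $0$. Any $\Pi$ places each $a_j$ into exactly one row (the remaining entries of that column being $0$), the row sums of $A^\Pi$ are precisely the $m$ part sums of the induced assignment, and hence $A$ is completely mixable iff all of them equal $T$ (Observation~\ref{obs:rowsum}), i.e.\ iff the partition instance is solvable. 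Since complete mixability is decidable in pseudopolynomial time for a fixed number of rows, this cannot be strengthened to \emph{strong} $\NP$-completeness unless $\P=\NP$, which is why only weak hardness is claimed here.

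The only delicate points are the two elementary ``both directions'' verifications for the padding columns, and the bookkeeping that the N3DM reduction keeps all entries polynomially bounded so that ``strongly'' is justified, while $m$-way partition is $\NP$-hard already for each individual fixed $m$. Beyond picking the right source problems --- the one genuine idea --- I foresee no obstacle.
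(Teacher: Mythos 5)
Your proposal is correct and uses essentially the same reductions as the paper: \textsc{Numerical 3-Dimensional Matching} placed directly into the three columns for the strong hardness with $d=3$, and \textsc{Partition} encoded as a matrix with the numbers in one row and zeros elsewhere for the fixed-$m$ case. You additionally spell out details the paper leaves implicit (NP membership, zero-column padding for fixed $d>3$, and the $m$-way partition generalization for fixed $m>2$), all of which check out.
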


\begin{proof}
  Even for $d=3$ we are looking at a \textsc{Numerical 3-dimensional
    Matching} problem, which is strongly $\NP$-complete~\cite[problem
  \textsc{SP16}]{GareyJohnson79} (the row sum that needs to be tested
  is given by Observation~\ref{obs:rowsum}).

  For $m=2$ we can reduce \textsc{Number Partition} to
  this problem: Let $(n_1,\dots,n_d)\in\Z^d$ be a multiset of
  integers, and let $s=\sum_i n_i$. Then $A=
  \begin{pmatrix}
    n_1 & \dots & n_d\\
    0   & \dots & 0\\
  \end{pmatrix}$ is completely mixable if and only if
  $(n_1,\dots,n_d)$ can be partitioned into two multisets of equal
  size $\tfrac{1}{2}s$. This problem is known to be (weakly)
  $\NP$-complete~\cite{garey-johnson:78}.
\end{proof}


We note that, as is the case for many $\NP$-hard problems, there can
not be a polynomial time approximation algorithm computing an
approximate value $\gamma'(A)$ that achieves an additive error
$|\gamma'(A)-\gamma(A)|\leq K$ for some constant $K$: For a given
completely mixable matrix $A\in\Z^{m\times d}$ the matrix obtained by
appending the column $(K',2K',\dots,mK')^\top$ with
$K'\geq\max{2da^*,K}$ (where $a^*$ denotes the largest entry of $A$)
has all row sums separated by at least $K'$, so approximating
$\gamma(A)$ to within $K$ amounts to deciding complete mixability.

Clearly, when both $d$ and $m$ are fixed the problem is trivial by
enumeration. For fixed $m$ and variable $d$ a dynamic programming
algorithm similar to the one for \textsc{Number Partition} of Garey
and Johnson~\cite{GareyJohnson79} can be devised to check complete
mixability:
\begin{lemma}
  There is a pseudopolynomial algorithm to decide complete mixability
  for matrices $A\in\Z_{\geq0}^{m\times d}$ if the number of rows $m$ is fixed.
\end{lemma}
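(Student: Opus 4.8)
The plan is to mimic the standard dynamic-programming argument for \textsc{Number Partition}, but working over the lattice of achievable row sums rather than a single target number. Fix $m$ and let $A\in\Z_{\geq 0}^{m\times d}$. By Observation~\ref{obs:rowsum} it suffices to decide whether there is a choice of permutations making every row sum equal to $r:=\tfrac1m\sum_{i,j}A_{ij}$ (if $m\nmid\sum_{i,j}A_{ij}$ we reject immediately). We process the columns one at a time. After having processed columns $1,\dots,k$, the relevant information about a partial assignment is just the multiset (equivalently, the sorted vector) of the $m$ partial row sums obtained so far; which physical row carries which partial sum is irrelevant, since the remaining columns may still be permuted arbitrarily. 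So the state space is the set $\mathcal{S}_k\subseteq\Z_{\geq0}^m$ of sorted $m$-tuples $(s_1\le\cdots\le s_m)$ that are attainable as partial row-sum vectors using the first $k$ columns.

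First I would set $\mathcal{S}_0=\{(0,\dots,0)\}$ and describe the transition: $\mathcal{S}_{k+1}$ is obtained from $\mathcal{S}_k$ by taking each sorted tuple $(s_1,\dots,s_m)\in\mathcal{S}_k$, each of the $m!$ assignments of the multiset of entries of column $k+1$ to the $m$ coordinates, adding componentwise, and re-sorting the result; collect all tuples so produced. Then $A$ is completely mixable if and only if $(r,\dots,r)\in\mathcal{S}_d$. Correctness is immediate by induction on $k$: a sorted partial row-sum vector is attainable with $k+1$ columns precisely if it is the sorted sum of some attainable $k$-column vector and some permutation of column $k+1$, and sorting loses no information because permuting rows of the whole matrix is free.

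For the running time I would bound $|\mathcal{S}_k|$. Every partial row sum is a sum of at most $d$ entries each of size at most $a^*:=\max_{i,j}A_{ij}$, so each coordinate lies in $\{0,1,\dots,d a^*\}$; hence $|\mathcal{S}_k|\le (d a^*+1)^m$, which is polynomial in the numeric input size once $m$ is fixed (indeed polynomial in $d$ and $a^*$, and $a^*$ is at most $2^{\langle A\rangle}$). Computing $\mathcal{S}_{k+1}$ from $\mathcal{S}_k$ costs $O(|\mathcal{S}_k|\cdot m!\cdot m\log m)$ for the additions and re-sorting, so the total work is $O\!\left(d\cdot (d a^*+1)^m\cdot m!\cdot m\log m\right)$, which is pseudopolynomial (polynomial for fixed $m$). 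One can shave the obvious redundancy by enumerating, for each state, only the $\binom{?}{?}$ distinct ways to distribute the multiset of column entries among sorted coordinates, but this is not needed for the claimed bound. If desired, storing for each tuple in $\mathcal{S}_{k+1}$ a back-pointer to a realizing pair $(\text{tuple in }\mathcal{S}_k,\ \text{permutation of column }k+1)$ lets one reconstruct the actual permutations $\Pi$ witnessing the complete mix, within the same time bound.

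I do not expect a genuine obstacle here; the only point requiring a little care is the justification that tracking \emph{sorted} tuples rather than row-indexed tuples is without loss of generality — i.e.\ that at every stage the as-yet-unused columns can realize any permutation of the rows independently of past choices — and the bookkeeping that the state count $(d a^*+1)^m$ is genuinely pseudopolynomial (polynomial in the unary encoding of the entries) and polynomial for fixed $m$. Both are routine. As in the \textsc{Number Partition} dynamic program of Garey and Johnson~\cite{GareyJohnson79}, the exponential dependence is confined to the fixed parameter $m$ and to $\log a^*$, which is exactly what ``pseudopolynomial for fixed number of rows'' means.
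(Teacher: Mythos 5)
Your proof is correct and follows essentially the same route as the paper: a column-by-column dynamic program over the vector of attainable partial row sums, with transitions ranging over the $m!$ permutations of the next column and a state space of size $O((da^*+1)^m)$, pseudopolynomial for fixed $m$. Your refinement of storing only sorted tuples (justified because the attainable set is closed under row permutation) is a minor optimization over the paper's row-indexed table, not a different argument.
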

\begin{proof}
  We can enumerate all possible values appearing as row sums as
  $v_1,\dots,v_N$, with $N\leq
  d\cdot\max_{1\leq i\leq m,1\leq j\leq d}A_{ij}$.  Build a
  dynamic programming table $B$ with Boolean entries $B(i,j,v_1,\dots,v_N)$, where
  $B(i,j,\dots,r,\dots)$ is \textsc{True} if and only if value $r$
  can be constructed as a (partial) row sum in row $i$ with $j$
  columns: Iterate over the columns of $A$ succesively and update $B$
  using each of the (fixed number of) permutations that can be applied
  to column $j$ of $A$. Then $A$ is completely mixable if
  $B(i,d,\dots,r,\dots)$ is \textsc{True} for all rows $i$, where $r$
  is the target row sum $\tfrac{1}{m}\sum_{i,j}A_{ij}$.
\end{proof}

The results in~\cite{goossens-polyakovskiy-spieksma-woeginger:10} for
the bottleneck $3$-assignment problem with costs defined by distances
(\textbf{B3AP-per}) yield a $2$-approxi\-mation for determining
$\gamma(A)$ and $\beta(A)$.

\begin{lemma}
  For $A\in\Z_{\geq0}^{m\times 3}$ there exists a polynomial
  2-approxi\-mation algorithm
  for computing $\gamma(A)$.
\end{lemma}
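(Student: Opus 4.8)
The plan is to exhibit an explicit algorithm and bound its output against two cheap lower bounds on $\gamma(A)$. Recall from the discussion above that $\gamma(A)$ is exactly the $3$-dimensional bottleneck assignment problem with the decomposable cost $c_{i_1,i_2,i_3}=A_{i_1,1}+A_{i_2,2}+A_{i_3,3}$, and that for two columns the bottleneck assignment problem is polynomially solvable (Fulkerson et al.~\cite{fulkerson-glicksberg-gross:53}; for additive costs it amounts to sorting one column increasingly and the other decreasingly, which is optimal by a routine exchange argument). The algorithm is the two-phase merging underlying the approximation of~\cite{goossens-polyakovskiy-spieksma-woeginger:10}: first solve the bottleneck assignment problem on columns $1$ and $2$, obtaining a pair of permutations; collapse these two columns into a single column $d^{12}$ holding the $m$ resulting pairwise sums; then solve the bottleneck assignment problem on $d^{12}$ and column $3$, and output the three permutations obtained by composition.

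First I would record the two lower bounds, both of which exploit the normalization to nonnegative matrices. Write $M_j:=\max_i A_{ij}$ for the largest entry of column $j$, and let $\gamma_{12}$ be the optimal value of the bottleneck assignment problem on the $m\times 2$ submatrix of columns $1$ and $2$. Then (i) $\gamma(A)\geq M_3$, since in every arrangement the row containing the entry $M_3$ has row sum at least $M_3$ (the other two entries being nonnegative); and (ii) $\gamma(A)\geq\gamma_{12}$, obtained by taking an optimal arrangement of the full matrix and dropping the nonnegative third column.

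Next I would bound the value $V$ returned by the algorithm. After the first phase every entry of $d^{12}$ is at most $\gamma_{12}$, and every entry of column $3$ is at most $M_3$; hence the row sums produced in the second phase — in fact those of \emph{any} arrangement of $d^{12}$ against column $3$ — are at most $\gamma_{12}+M_3$. Combining with (i) and (ii) yields $\gamma(A)\leq V\leq \gamma_{12}+M_3\leq 2\,\gamma(A)$, the claimed guarantee, and the lower inequality is immediate since $V$ is the maximum row sum of a concrete arrangement. The running time is dominated by two calls to a polynomial-time $2$-dimensional bottleneck assignment routine (or, with additive costs, by two sorts), hence is polynomial; a naive implementation runs in $\Oh(m\log m)$.

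I do not expect a real obstacle; the only points that need care are that both lower bounds, and therefore the factor $2$, depend on nonnegativity and on the exact solvability of the two-column case. It is worth remarking that iterating the merging step gives only a $(d-1)$-approximation for $A\in\Z_{\geq 0}^{m\times d}$, so this direct argument is a constant-factor approximation precisely when $d=3$; pushing the factor below $2$ even for three columns meets the strong $\NP$-completeness of Theorem~\ref{thm:np-completeness}, and sharper guarantees for fixed $d$ are instead delivered by the PTAS.
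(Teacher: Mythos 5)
Your argument is correct, and it is a genuinely different (and more self-contained) route than the paper's. The paper proves this lemma by \emph{reduction}: it defines the half-sum ``distance'' $\dist(i,j)=\tfrac{1}{2}(A_{\cdot 1}+A_{\cdot 2})$-style function on the $3m$ entries, checks that it is symmetric and satisfies the triangle inequality, observes that the resulting perimeter costs $c_{ijk}=A_{i1}+A_{j2}+A_{k3}$ recover $\gamma(A)$, and then invokes Theorem~1 of~\cite{goossens-polyakovskiy-spieksma-woeginger:10} as a black box to get the factor $2$. You instead give the merging algorithm explicitly and prove the guarantee from scratch via the two lower bounds $\gamma(A)\geq M_3$ and $\gamma(A)\geq\gamma_{12}$ (both of which correctly use nonnegativity and the monotone/antitone exchange argument for the two-column case, the latter being the bottleneck analogue of Lemma~\ref{lem:swapping-algorithm}), and the upper bound $V\leq\gamma_{12}+M_3$. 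What your version buys is transparency and an explicit $\Oh(m\log m)$ running time, and it avoids the slightly delicate point the paper has to address (that $\dist(i,i)\neq 0$, so the cited theorem must be re-checked to use only symmetry and the triangle inequality); what the paper's version buys is brevity and a direct link to the known inapproximability results for \textsc{B3AP-per}. One small caveat on your closing remark: strong $\NP$-completeness of complete mixability (Theorem~\ref{thm:np-completeness}) does not by itself rule out a factor below $2$ --- indeed the paper's final corollary gives a PTAS for every fixed $d$, including $d=3$ --- so you should only claim that \emph{this particular} merging analysis is tight at $2$, not that $2$ is a barrier; this does not affect the validity of the proof itself.
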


\begin{proof}
  For convenience we will in this proof assume that the matrix $A$ is
  indexed by $(i,j)$ with $0\leq i\leq m-1$ and $0\leq j\leq d-1$.
  We construct an instance of \textsc{B3AP-per} as follows: Let
  $I=\{0,\dots,3m-1\}$ denote the indices of all elements of $A$ in
  column-major order, i.e.~index $l\in I$ selects element $(\lfloor
  \tfrac{l}{3}\rfloor,l\mod 3)$ of $A$, and define the sets
  $R=\{3k+1\;|\; k<m\}$, $G=\{3k+2\;|\;k<m\}$, and
  $B=\{3k+3\;|\;k<m\}$ such that $I=R\cup G\cup B$. Define
  $\dist(i,j)=\tfrac{1}{2}(A_{(\lfloor\tfrac{i}{3}\rfloor),(i\mod
    3)}+A_{(\lfloor\tfrac{j}{3}\rfloor),(j\mod 3)})$. Then $\dist$
  satisfies the triangle inequality and is symmetric. It does not
  necessarily satisfy $\dist(i,i)=0$, so is not a proper
  metric. Nevertheless, Theorem~1
  of~\cite{goossens-polyakovskiy-spieksma-woeginger:10} holds with the
  original proof, as only symmetry and triangle inequality are
  exploited, and $\dist(i,j)$ is only ever evaluated between pairs of
  different index sets from $\{R,G,B\}$, i.e.
  $\lfloor\tfrac{i}{3}\rfloor\neq\lfloor\tfrac{j}{3}\rfloor$.
  With
  our definition of $\dist(i,j)$
  \[
  \begin{array}{rl}
    c_{ijk}&=\dist(i,j)+ \dist(j,k)+\dist(k,l)\\
          &=\tfrac{1}{2}\bigl((A_{i1}+A_{j2})+(A_{j2}+A_{k3})+(A_{k3}+A_{i1})\big)

  \end{array},
  \]
  since costs need only be defined for $i\in R,j\in G,k\in B$. Then
  determining $\gamma(A)$ is exactly the \textsc{B3AP-per} problem
  of~\cite{goossens-polyakovskiy-spieksma-woeginger:10}.
\end{proof}

\section{The swapping algorithm}
\label{sec:swapping-algorithm}

As noted by Puccetti and Rüschendorf~\cite{puccetti-rueschendorf:11},
it is sometimes easy to check that a matrix can be permuted so as to
increase its smallest row sum. We need the following definition:

\begin{definition}
  For $A\in\Z^{m\times d}$ let $\dropped{A}{j}$ denote the matrix
  obtained from $A$ by dropping its $j$-th column,
  i.e. $\dropped{A}{j}=(A_{\cdot 1}\dots A_{\cdot (j-1)}A_{\cdot
    (j+1)}\dots A_{\cdot d})$.

  For $x,y\in\Z^m$ denote by $x\antisorted y$ that $x$ and $y$ are
  \emph{oppositely ordered}, i.e.
  there exists a permutation $\pi\in\Symm(m)$ such that
  $x_{\pi(1}\leq\dots\leq x_{\pi{m}}$ and $y_{\pi(1)}\geq\dots\geq y_{\pi(m)}$.
\end{definition}

\begin{lemma}[Theorem 3.1
  of~\cite{puccetti-rueschendorf:11}]\label{lem:swapping-algorithm}
  Let $A\in\Z^{m\times d}$. If there exists a column index $j$
  such that $(\sum_l \dropped{A}{j}_{1l},\dots,\sum_l \dropped{A}{j}_{ml})^\top \notantisorted A_{\cdot j}$, then column $A_{\cdot j}$ can be permuted such that
  opposite ordering is achieved, and the minimal row sum of $A$ does not
  decrease.
\end{lemma}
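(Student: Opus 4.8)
The plan is to recognize the statement as a claim about rearranging one sequence against another so as not to decrease the minimum pairwise sum, and to establish it by a finite sequence of local two-entry swaps. First I would fix notation: set $s_i:=\sum_l\dropped{A}{j}_{il}$ for the $i$-th row sum of the matrix obtained from $A$ by deleting column $j$, and $a:=A_{\cdot j}$. Then the row sums of $A$ in its current arrangement are $r_i=s_i+a_i$, and re-permuting column $j$ by $\tau\in\Symm(m)$ produces row sums $s_i+a_{\tau(i)}$. So the goal becomes: produce a permutation $\tau^*$ such that $(s_i)_i$ and $(a_{\tau^*(i)})_i$ are oppositely ordered while $\min_i(s_i+a_{\tau^*(i)})\ge\min_i r_i$. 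I would obtain $\tau^*$ starting from the current arrangement by repeatedly applying a single exchange step, each of which provably does not decrease the minimal row sum.

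The heart of the argument --- and the one place that needs a genuine (if short) idea --- is this exchange step. If the current pairing of $s$ against $a$ is not oppositely ordered, then by the standard characterization $x\antisorted y\iff(x_i-x_k)(y_i-y_k)\le0$ for all $i,k$, there exist indices $p,q$ with $s_p<s_q$ and $a_p<a_q$. Swapping the entries $a_p$ and $a_q$ of column $j$ replaces the two affected row sums $\{s_p+a_p,\,s_q+a_q\}$ by $\{s_p+a_q,\,s_q+a_p\}$ and leaves every other row sum untouched. I would then observe that $s_p+a_p$ is the smaller of the two original sums (both its summands being the smaller ones), that $s_p+a_q\ge s_p+a_p$ since $a_q>a_p$, and that $s_q+a_p\ge s_p+a_p$ since $s_q>s_p$; hence $\min\{s_p+a_q,\,s_q+a_p\}\ge\min\{s_p+a_p,\,s_q+a_q\}$, so the global minimal row sum does not decrease under the swap.

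Finally I would argue termination: iterating the exchange step halts at an oppositely-ordered arrangement. The clean device is the potential $\Phi(\tau):=\sum_{i=1}^m s_i\,a_{\tau(i)}$, which the swap above strictly decreases, by $-(s_q-s_p)(a_q-a_p)<0$; since there are only finitely many arrangements of column $j$, the process stops after finitely many swaps, and it can stop only when no concordant pair $(p,q)$ remains --- i.e.\ when $s$ and the re-permuted column $j$ are oppositely ordered. As the minimal row sum never decreased along the way, this yields the lemma. I expect the only subtlety to watch is that the quantity being controlled is the \emph{minimum} of the row sums rather than their sum, so the usual global rearrangement inequality cannot be invoked directly --- which is precisely why the two-entry local swap is the right primitive; ties among the $s_i$ are harmless, since swapping two $a$-entries sitting at equal $s_i$ alters neither $\Phi$ nor the multiset of row sums.
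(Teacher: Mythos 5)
Your proof is correct and follows essentially the same route as the paper's: identify a concordant pair $(p,q)$ with $s_p<s_q$, $a_p<a_q$, observe that swapping $a_p\leftrightarrow a_q$ replaces $\{s_p+a_p,\,s_q+a_q\}$ by two sums each at least $s_p+a_p$, and iterate. Your explicit termination argument via the potential $\Phi(\tau)=\sum_i s_i a_{\tau(i)}$ is a welcome addition that the paper leaves implicit (it only proves the single swap step), and your use of strict inequalities from the negation of $\antisorted$ is actually cleaner than the paper's statement of the concordant pair.
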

For completeness we give the following proof.
\begin{proof}
  Let $(\sum_l \dropped{A}{j}_{1l},\dots,\sum_l
  \dropped{A}{j}_{ml})^\top =: x \notantisorted y:=A_{\cdot j}$. Then
  there exists a pair of indices $i_1,i_2$ such that $x_{i_1}\leq
  x_{i_2}$ and $y_{i_1}\leq y_{i_2}$. Therefore $x_{i_1}+y_{i_1} \leq
  x_{i_1}+y_{i_2}$ and $x_{i_1}+y_{i_1} \leq
  x_{i_2}+y_{i_1}$. Hence
  $$\min\{x_{i_1}+y_{i_1},x_{i_2}+y_{i_2}\}=x_{i_1}+y_{i_1}
  \leq\min\{x_{i_1}+y_{i_2},x_{i_2}+y_{i_1}\},$$
  and thus swapping $y_{i_1}\leftrightarrow y_{i_2}$ cannot decrease
  the minimal row sum of $A$.

  We note that if both $x_{i_1}< x_{i_2}$  and $y_{i_1}< y_{i_2}$, and
  there are no duplicate entries in $x$ and $y$,
  then the minimal row sum of $A$ will actually increase by at least
  $1$ if $i=\argmin_{i_1,i_2}\{x_{i_1}+ y_{i_1},x_{i_2}+ y_{i_2}\}$ is chosen
  minimally.
\end{proof}

In~\cite{puccetti-rueschendorf:11} this is taken as a rationale to
propose the following algorithm:

\begin{algorithm}
  \caption{Swapping Algorithm}\label{alg:swapping}
  \begin{algorithmic}[1]
    \Procedure{AntisortColumns}{$A$}
     \While{$\exists j: (\sum_l \dropped{A}{j}_{1l},\dots,\sum_l
       \dropped{A}{j}_{ml})^\top \notantisorted A_{\cdot j}$}
      \State $x\gets (\sum_l \dropped{A}{j}_{1l},\dots,\sum_l
       \dropped{A}{j}_{ml})^\top$
      \State $y\gets A_{\cdot j}$
      \State select $(i_1,i_2)$ from $\{(i_1,i_2)\;|\;
      x_{i_1}<x_{i_2}\wedge y_{i_1}<y_{i_2}\}$
      \State swap $A_{i_1 j}\leftrightarrow A_{i_2 j}$
     \EndWhile
    \EndProcedure
  \end{algorithmic}
\end{algorithm}

It is then stated and confirmed experimentally that running this
algorithm on many randomly permuted copies of the matrix $A$ will
usually determine very good bounds for $\beta(A)$ and $\gamma(A)$, and
is often very fast. In~\cite{embrechts-puccetti-rueschendorf:12} it is
admitted that no analytic proof of convergence to the optimum is
known, even when randomly permuting the starting matrix, despite the
promising practical results.  This is to be expected:

\begin{lemma}
  The swapping algorithm~\ref{alg:swapping}
  of~\cite{puccetti-rueschendorf:11} does not run in expected
  polynomial time unless $\NP\subseteq\ZPP$.
\end{lemma}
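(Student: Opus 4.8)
The plan is to argue in contrapositive: assume the swapping procedure of~\cite{puccetti-rueschendorf:11} --- Algorithm~\ref{alg:swapping} run, as proposed there, on many independently and uniformly randomly permuted copies of $A$ --- determines $\gamma(A)$ and $\beta(A)$ in expected polynomial time, and deduce $\NP\subseteq\ZPP$. The bridge is Observation~\ref{obs:rowsum}: $A$ is completely mixable if and only if $\beta(A)=\gamma(A)=\tfrac1m\sum_{i,j}A_{ij}$, and this comparison is a single polynomial-time arithmetic step once the two values are available. Hence, under the hypothesis, complete mixability is decided with zero error in expected polynomial time, i.e. it lies in $\ZPP$; but by Theorem~\ref{thm:np-completeness} deciding complete mixability is $\NP$-complete (already for $d=3$, and also for $m=2$), so $\NP\subseteq\ZPP$.

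A few points then have to be filled in. First, one should note that an all-equal-row-sums configuration is a fixed point of Algorithm~\ref{alg:swapping}: if every row sum equals $\bar s:=\tfrac1m\sum_{i,j}A_{ij}$ then the complement-column-sum vector of column $j$ is $(\bar s-A_{1j},\dots,\bar s-A_{mj})$, which is oppositely ordered to $A_{\cdot j}$, so no swap is triggered. Thus the heuristic \emph{can} reach the optimum, and with random restarts it does so with positive probability per restart; the hypothesised expected-polynomial running time is precisely what bounds the expected number of restarts (and of swaps) by a polynomial. Second, by Lemma~\ref{lem:swapping-algorithm} and the genericity note in its proof, after a harmless scaling-and-perturbation of $A$ each swap --- when the swapped pair is chosen minimally, as in that proof --- strictly increases the minimal row sum, so a single run terminates after at most $d\cdot\max_{i,j}A_{ij}$ swaps, polynomially many on the strongly-$\NP$-complete instances of Theorem~\ref{thm:np-completeness}, confirming that it is the number of random restarts, not the length of a run, that the hypothesis constrains.

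The step I expect to require the most care is turning the heuristic into a genuinely \emph{zero-error} decision procedure. On its own, Algorithm~\ref{alg:swapping} only ever produces monotone \emph{bounds} --- lower bounds for $\beta$, and, through Observation~\ref{obs:beta-and-gamma}, upper bounds for $\gamma$ --- and a single run halts at a local optimum that need not be global; so what one gets directly is only a one-sided, $\mathcal{RP}$-style test: whenever some run reaches an all-equal-row-sums state, complete mixability is certified, but non-mixability is never certified. Upgrading this to $\ZPP$ uses the hypothesis in its strong form, namely that the randomised restarting procedure does not merely produce good bounds but \emph{converges to, and reports, the exact values} $\beta(A)$ and $\gamma(A)$ --- which is exactly the ``convergence to the optimum'' whose efficiency is at issue in~\cite{embrechts-puccetti-rueschendorf:12}. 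Under that reading both the affirmative and the negative answers of the resulting comparison are certain, so complete mixability is in $\ZPP$; pinning down this reading and verifying zero error is the crux, and once it is done the contradiction with Theorem~\ref{thm:np-completeness} is immediate.
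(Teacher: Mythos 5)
Your proposal is correct and follows essentially the same route as the paper: the paper's own proof is a three-line argument that a correct expected-polynomial-time randomized run of the swapping algorithm would place the (strongly $\NP$-complete, by Theorem~\ref{thm:np-completeness}) complete-mixability problem in $\ZPP$. Your additional observations --- that the equal-row-sum state is a fixed point, that a single run terminates, and that the real crux is interpreting the hypothesis as a genuinely zero-error (rather than one-sided) procedure --- are all sound and in fact spell out details the paper's terse proof leaves implicit.
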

\begin{proof}
  Consider an instance of the complete mixability problem. Apply the
  swapping algorithm. Assume that the expected number of times that
  the input matrix has to be randomly permuted before the swapping
  algorithm correctly decides complete mixability were of polynomial
  size. Since we have shown in Theorem~\ref{thm:np-completeness} that
  the problem is strongly $\NP$-complete this would yield a zero-error
  probabilistic polynomial time algorithm~\cite{gill:77} for all problems in
  $\NP$. This would imply $\NP\subseteq\ZPP$.
\end{proof}

In fact, the algorithm may terminate with an approximation error of
$\Oh(\max_{ij}A_{ij})$ (Lemma~\ref{lem:swapping-bad}).

For some matrices, however, Lemma~\ref{lem:swapping-algorithm}
actually guarantees a positive increase of the minimal row sum: As noted at
the end of the proof of Lemma~\ref{lem:swapping-algorithm}, swapping
entries in a column, say $j$, to achieve opposite ordering will
actually increase the minimal row sum by at least $1$, unless there
are duplicate entries in $j$ or duplicate row sums in the matrix
$\dropped{A}{j}$. This yields
\begin{observation}\label{obs:swapping-pseudopoly}
  Let $A\in\Z^{m\times d}$ be a matrix where all columns have $m$
  different entries, and for which all $(d-1)$-column submatrices
  obtained by deleting a single column have the property that for all
  possible permutation of column entries their $m$ row sums have $m$
  distinct values. Then $\gamma(A)$ and $\beta(A)$ can be determined
  in pseudopolynomial time using the swapping algorithm.
\end{observation}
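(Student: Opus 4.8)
The plan is to bound the number of swapping steps the algorithm can perform before it must terminate, and to argue that upon termination it has found the optimum. By Lemma~\ref{lem:swapping-algorithm} together with the remark at the end of its proof, each swap on a column $j$ of such a matrix strictly increases the minimal row sum by at least $1$: the hypothesis that column $j$ has $m$ distinct entries gives $y_{i_1}<y_{i_2}$ (after replacing any weak inequality among equal entries is vacuous), and the hypothesis that the row sums of $\dropped{A}{j}$ are all distinct gives $x_{i_1}<x_{i_2}$, so the strict-increase case of the remark applies. Hence every iteration of the \textsc{while} loop raises $\min_{1\leq i\leq m}\sum_j A^\Pi_{i,j}$ by at least $1$.

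First I would observe that the minimal row sum is bounded above by $\tfrac{1}{m}\sum_{i,j}A_{ij}\leq d\cdot\max_{ij}A_{ij}$ and below (at the start, after the shift to nonnegative entries) by $0$, so the loop can execute at most $d\cdot\max_{ij}A_{ij}$ times, which is pseudopolynomial in the input. Each iteration requires finding a violating column and a violating index pair, sorting, and performing one swap, all of which is polynomial in $m$ and $d$; so the total running time is pseudopolynomial. This computes the value $\beta(A)$ provided we can argue the algorithm cannot get stuck at a suboptimal configuration.

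Second, I would justify that the fixed point reached is globally optimal for $\beta$. When the loop terminates, for every column $j$ the vector of row sums of $\dropped{A}{j}$ is oppositely ordered to $A_{\cdot j}$; by the rearrangement inequality this is exactly the condition that column $j$ is placed so as to maximize the minimum (and indeed it is the configuration attaining $\beta$ of the two-block matrix $(\dropped{A}{j}\mathbf{1}\ \ A_{\cdot j})$). The key point is that under the non-degeneracy hypotheses the opposite ordering is \emph{unique}, so the algorithm reaches a unique fixed point regardless of the order of swaps, and this fixed point is the unique simultaneous optimizer over all columns, hence equals $\beta(A)$. For $\gamma(A)$ one applies the same argument via Observation~\ref{obs:beta-and-gamma}, noting that the transformation $A'_{ij}=l-A_{ij}$ preserves both non-degeneracy hypotheses (distinct entries stay distinct, and row sums of $\dropped{(A')}{j}$ are affine images of those of $\dropped{A}{j}$).

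The main obstacle is the last step: arguing that a column-wise fixed point of the opposite-ordering condition is actually the global optimum, rather than merely a local one. The clean way to see this is that for each $j$, fixing the other $d-1$ columns, the best placement of column $j$ is forced (and unique, by non-degeneracy) by the rearrangement inequality; a configuration that is simultaneously optimal in each coordinate and where each coordinate's optimum is unique must be the unique global optimum of the max--min objective, since any globally optimal configuration is in particular column-wise optimal and hence coincides with it. I would spell this uniqueness argument out carefully, as it is the crux of why the greedy swapping procedure does not merely terminate but terminates at the right value.
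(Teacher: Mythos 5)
Your first part is exactly the paper's justification of this observation: under the two non-degeneracy hypotheses every executed swap strictly increases the minimal row sum by at least $1$ (this is the remark at the end of the proof of Lemma~\ref{lem:swapping-algorithm}), the minimal row sum is bounded by $d\cdot\max_{ij}A_{ij}$, so the loop runs pseudopolynomially many times, and $\gamma$ is handled via Observation~\ref{obs:beta-and-gamma}. One caveat that applies to the paper as well as to you: the strict increase only happens if the violating pair $(i_1,i_2)$ is chosen so that the row currently attaining the minimum is one of the two swapped rows. With complementary row sums $x=(5,1,4)$ and column $y=(6,100,200)$ (all entries distinct), the only violating pair is rows $2,3$, and swapping them leaves the minimal row sum $11$ in row $1$ unchanged. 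So ``every iteration raises the minimum by $1$'' needs either a rule for selecting the pair or a separate potential-function argument for the dead swaps; since the paper is equally terse here I only flag it.

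The genuine gap is in your second part, which you correctly identify as the crux. The inference ``each column's best response to the other columns is unique, the terminal configuration is column-wise optimal, hence it is the unique global optimum'' is a non sequitur: uniqueness of every coordinate's best response does not force uniqueness of the simultaneous fixed point, exactly as a game in which every player has a unique best response can have several Nash equilibria. The paper's own Lemma~\ref{lem:swapping-bad} makes this concrete: for the $(N,3)$-consecutive-integers matrix both the terminal matrix $A'$ of that lemma (minimal row sum $N+2$) and the completely mixed arrangement (all row sums about $\tfrac{3}{2}N$) have every column oppositely ordered to its complementary row sums, so column-wise opposite ordering neither determines the configuration nor certifies optimality. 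That example violates the hypotheses of the present observation, but your argument never uses those hypotheses beyond uniqueness of each column's best response, so it cannot tell the two situations apart. The auxiliary claim that every globally optimal configuration is column-wise antisorted is also false (Lemma~\ref{lem:swapping-algorithm} only shows that \emph{some} optimum can be made antisorted in a given column without loss). Note that the paper itself proves only pseudopolynomial termination and is silent on why the terminal configuration attains $\beta(A)$ exactly; so you have not overlooked an argument that exists in the paper, but the optimality step as you have written it is incorrect and would require a genuinely different idea.
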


It is not unlikely that a matrix with entries drawn uniformly at
randomly from a large domain with few rows has no duplicate row sums
(Lemma~\ref{lem:duplicate-row-sums}), but it seems very hard to trace
how this probability evolves after a few steps of swapping.

\begin{lemma}\label{lem:duplicate-row-sums}
  Let $A\in\Z^{m\times (d+1)}_{\geq0}$ be a matrix where  each column
  contains $m$ entries drawn uniformly at random from $\{1,\dots,N\}$.
  Then the probability $p_{\neq}(A)$ for a $d$-column submatrix of
  $A$ to have $m$ distinct row sums is
  $$p_{\neq}(A)\geq 1-\Oh(\tfrac{m^2}{N}).$$
\end{lemma}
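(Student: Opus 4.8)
\noindent\emph{Proof plan.}
The plan is to bound the probability of the complementary event --- that two distinct rows of a fixed $d$-column submatrix produce the same row sum --- by a union bound over pairs of rows, the one real ingredient being a small-ball (Littlewood--Offord type) estimate for sums of independent uniform differences. Since the $d+1$ columns of $A$ are exchangeable, it suffices to analyse the submatrix $B$ formed by columns $1,\dots,d$; its row sums are $\sigma_i=\sum_{j=1}^{d}B_{ij}$, where the $B_{ij}$ are independent and uniform on $\{1,\dots,N\}$. First I would fix a pair of distinct rows $i,i'$ and set $D_j:=B_{ij}-B_{i'j}$; the $D_j$ are independent across $j$, and $\sigma_i=\sigma_{i'}$ holds precisely when $\sum_{j=1}^{d}D_j=0$, so everything reduces to bounding $\Prob{\sum_{j=1}^{d}D_j=0}$.

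Next I would record the anti-concentration bound for one difference: $D_j$ has the triangular law $\Prob{D_j=k}=(N-|k|)/N^2$ for $|k|\le N-1$, hence $\max_{k}\Prob{D_j=k}=\Prob{D_j=0}=1/N$. Conditioning on $D_2,\dots,D_d$ and using that $D_1$ is independent of them, $\Prob{D_1=-s}\le 1/N$ for every integer $s$, so $\Prob{\sum_{j=1}^{d}D_j=0}=\sum_{s}\Prob{\sum_{j\ge2}D_j=s}\,\Prob{D_1=-s}\le 1/N$ (for $d=1$ this is immediate, and for larger $d$ the truth decays exponentially in $d$, but the crude bound $1/N$ already suffices).

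Finally, a union bound over the $\binom{m}{2}$ unordered pairs of rows yields that the probability that some two row sums of $B$ coincide is at most $\binom{m}{2}\cdot\tfrac1N\le\tfrac{m^2}{2N}$, whence $p_{\neq}(A)\ge 1-\tfrac{m^2}{2N}=1-\Oh(m^2/N)$. I do not expect any genuine obstacle here: the only mildly delicate point is the small-ball estimate, and even there it is enough that a single coordinate $D_1$ carries maximal point mass $1/N$, so the argument is essentially a textbook union bound once the reduction to the event $\sum_j D_j=0$ is set up.
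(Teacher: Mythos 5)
Your proof is correct, but it takes a genuinely different route from the paper's. You reduce the question to a birthday-type union bound over the $\binom{m}{2}$ pairs of rows and bound each pairwise collision probability $\Prob{\sigma_i=\sigma_{i'}}=\Prob{\sum_j D_j=0}$ by the anti-concentration of a single uniform difference, $\max_k\Prob{D_1=k}=1/N$, via conditioning on the remaining coordinates. The paper instead works with a fixed \emph{value} $s$ of the row sum: it computes the probability $p(s,d)/N^d$ that one row attains $s$ (with $p(s,d)$ the number of compositions of $s$ into $d$ parts), applies the binomial formula to get the probability that $s$ is attained by at least two of the $m$ rows, takes a union bound over the at most $m$ row-sum values actually realized, and then needs the combinatorial estimates $1\le p(s,d)\le (s-d+1)^{d-1}$ to extract the $1-\Oh(m^2/N)$ asymptotics. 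Your argument is shorter and cleaner: it avoids the composition count entirely, sidesteps the slightly delicate union bound over realized (rather than a priori fixed) sum values, and yields the explicit constant $m^2/(2N)$; as you note, it would also immediately give the improved pairwise bound $\Oh(1/(N\sqrt d))$ if one used the full Littlewood--Offord decay rather than the crude $1/N$. The paper's computation, on the other hand, keeps the exact per-value hitting probability $p(s,d)/N^d$ visible, which is the quantity it is conceptually tracking, though it does not exploit this extra precision in the final bound. Both arguments rest on the same modelling assumption, stated explicitly in the paper's proof, that all entries of the submatrix are independent uniform draws, so your reduction to independent differences $D_j$ across columns is legitimate.
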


\begin{proof}
  Consider a $d$-column submatrix $M$. Each entry of $M$ is a random
  variable, independently drawn from $\{1,\dots,N\}$. We consider the
  entries of $M$ drawn from ${1,\dots,N}$ row by row. Hence the
  probability of obtaining sum $s$ in one row is\linebreak[4]
  $\Prob{M_{i1}+\dots+M_{id}=s}=\tfrac{p(s,d)}{N^d}$, where $p(s,d)$ is
  the number of partitions of $s$ into exactly $d$ parts. The probability of
  not obtaining sum $s$ is $\tfrac{N^d-p(s,d)}{N^d}$.
  
  Matrix $M$ has $m$ rows; using the binomial distribution formula the
  probability of obtaining sum $s$ in $m$ one-row trials is thus
  \begin{equation*}
  \begin{array}{l}
  \Prob{\text{row sum $s$ at least twice in $M$}}\\
  =1
  -\left(\frac{N^d-p(s,d)}{N^d}\right)^m
  -m\frac{p(s,d)}{N^d}\left(\frac{N^d-p(s,d)}{N^d}\right)^{m-1}.
\end{array}
\end{equation*}
  Therefore the probability for $M$ to have distinct row sums is
  \begin{align*}
    &\Prob{M\text{ has distinct row sums}} \\
    &=1-\Prob{\exists s:\text{ row sum $s$ at least twice in $M$}}\\
\intertext{and since we can have at most $m$ row sums,}
    &\geq 1-m\Prob{\text{most likely duplic. rowsum $s^*$ at least twice
      in $M$}}\\
    &=1-m\left(
      1
      -\left(\frac{N^d-p(s^*,d)}{N^d}\right)^m\right.\\
    &\hphantom{1-m}\left.
      -m\frac{p(s^*,d)}{N^d}\left(\frac{N^d-p(s^*,d)}{N^d}\right)^{m-1}
      \right)\\
\intertext{where, to upper bound the probability of duplicates, we need to lower bound $(N^d-p(s^*,d))$}
    &\geq 1-m\left(
      1
      -\frac{(N^d-N^{d-1})^{m}}{N^{dm}}\right.\\
      &\hphantom{1-m}
      \left. -m p(s^*,d)
      \frac{(N^d-N^{d-1})^{m-1}}{N^{dm}}
      \right)\\
    &\geq 1-m\left(
      1
      -\frac{\Oh(N^{dm}+(-1)^{m-1}N^{d(m-1)})}{N^{dm}}\right.\\
    &\hphantom{1-m}\left.
      -m\cdot 1\cdot
      \frac{\Oh(N^{d(m-1)}+(-1)^{m-2}N^{d(m-2)})}{N^{dm}}
      \right)\\
    &\geq 1-\Oh(\tfrac{m^2}{N})
  \end{align*}
  where for the partition of $s$ into $d$ parts we use the trivial
  lower bound of $1$ and the
  generous upper bound $p(s,d)\leq (s-d+1)^{d-1}$ which is obtained as
  follows: To partition $s^*$ we need to use at least $1$ unit in each
  of the $d$ parts. We now still can distribute $s^*-d$ units into $d$
  bins; we can choose freely from $\{0,\dots,s^*-d\}$ for $d-1$ bins,
  then the amount for the last bin is determined.
\end{proof}

  

\section{Matrices of consecutive integers}

\begin{definition}
  Let $d,N\in\Z_{\geq0}$ and $a=(1,\dots,N)^\top$. Every matrix
  $A^\Pi$ obtained through permutations $\Pi\in\Symm(N)^d$ of the columns
  from $A=(a,\dots,a)\in\Z^{N\times d}$ will be called
  \emph{$(N,d)$-complete consecutive integers matrix}.
\end{definition}

We will now show that for such matrices and certain choices of $N$
(given $d$) the values of $\beta$ and $\gamma$ can be computed
explicitly, and that these yield bounds for arbitrary values of $N$.
Furthermore, we will demonstrate that the swapping algorithm
of~\cite{puccetti-rueschendorf:11} (Algorithm~\ref{alg:swapping}) on
these instances does not have a constant factor approximation
guarantee (it is at least $\Oh(N)$).

\begin{theorem}
  Let $A\in\Z^{N\times d}_{\geq0}$ be a $(N,d)$-complete consecutive integers matrix and
  $N=d^k$ for some $0<k\in\Z$. Then $A$ is completely mixable and 
  $$\gamma(A)=\beta(A)=d+\sum_{i=0}^{d-1}\sum_{j=1}^{k} i\cdot d^{j-1}=:a_d(k).$$
\end{theorem}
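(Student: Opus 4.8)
My plan is to prove complete mixability by an explicit recursive construction, using the glueing operation (Proposition~\ref{prop:glueing}) as the inductive engine, and then to read off the common row sum from Observation~\ref{obs:rowsum}.

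First I would dispose of the value formula, since it follows from complete mixability for free: once we know $A$ is completely mixable, Observation~\ref{obs:rowsum} gives $\gamma(A)=\beta(A)=\tfrac{1}{N}\sum_{i,j}A_{ij}=\tfrac{d}{N}\sum_{i=1}^{N}i=\tfrac{d(N+1)}{2}$. So the arithmetic claim reduces to verifying the identity $\tfrac{d(d^k+1)}{2}=d+\sum_{i=0}^{d-1}\sum_{j=1}^{k} i\,d^{j-1}$, which is a routine geometric-series computation: the inner double sum is $\binom{d}{2}\cdot\tfrac{d^k-1}{d-1}=\tfrac{d(d^k-1)}{2}$, and adding $d$ gives $\tfrac{d(d^k+1)}{2}$. (I would just state this and leave the one-line check to the reader.)

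The substance is complete mixability itself, which I would prove by induction on $k$. For the base case $k=1$, $N=d$: the matrix whose columns are all equal to $(0,1,\dots,d-1)^\top$ (after the harmless shift to make entries start at $0$) can be antisorted column by column — cyclically shifting column $j$ by $j-1$ positions makes every row sum equal to $\binom{d}{2}$, the classic ``all row sums equal'' Latin-square-style arrangement. For the inductive step, suppose the $(d^{k-1},d)$-complete consecutive integers matrix $A_{k-1}$ is completely mixable, and let $B$ be the $(d,d)$-complete consecutive integers matrix from the base case, also permuted to constant row sums. I want to identify $A_k$ (up to permutation of rows and columns, which does not affect complete mixability) with a glueing $A_{k-1}\oplus B$ or $B\oplus A_{k-1}$: the block matrix has $d^{k-1}\cdot d = d^k = N$ rows and $d\cdot d$ columns — wait, that's $d^2$ columns, not $d$. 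So the naive glueing overshoots the number of columns, and the real work is to ``fold'' it back.

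The main obstacle, then, is exactly this column-count mismatch: glueing multiplies column counts, but I need to keep $d$ columns while multiplying row counts by $d$. The right fix is a mixed-radix / base-$d$ bijection. Index the $N=d^k$ rows by digit strings $(c_1,\dots,c_k)\in\{0,\dots,d-1\}^k$ via $i-1=\sum_{t=1}^{k}c_t d^{t-1}$; then I would define the permuted matrix $A^\Pi$ by setting its $(i,j)$ entry, for $j\in\{1,\dots,d\}$, to $\sum_{t=1}^{k} c_{t}^{(j)} d^{t-1}$ where $(c^{(j)}_1,\dots,c^{(j)}_k)$ cyclically shifts each digit-coordinate by an amount depending on $j$ — concretely $c^{(j)}_t = (c_t + \text{(something depending on }j,t)) \bmod d$, chosen so that for each fixed $t$ the $d$ values $c^{(j)}_t$ over $j=1,\dots,d$ form a permutation of $\{0,\dots,d-1\}$ that sums to $\binom{d}{2}$ in every row. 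Summing over $j$ and over $t$ then yields the constant $\sum_{t=1}^{k}\binom{d}{2} d^{t-1} = \binom{d}{2}\tfrac{d^k-1}{d-1}$ in every row, independent of $i$. The two things I must check are (i) that for each fixed $j$ the map $i\mapsto$ (column-$j$ entry) is a bijection of $\{1,\dots,N\}$ onto itself — this is where I need the shifts to be, for each $t$, a consistent permutation of the $t$-th digit, i.e.\ essentially that each column is obtained from $a=(1,\dots,N)^\top$ by a genuine permutation $\pi_j\in\Symm(N)$ — and (ii) that the row sums are constant, which is the digit-by-digit computation above. Step (i) is the one to be careful about: the cleanest route is to take the per-digit shift in coordinate $t$ for column $j$ to be $(j-1)\cdot r_t \bmod d$ for a fixed tuple $(r_1,\dots,r_k)$ — or even simpler, to realize $A^\Pi$ literally as an iterated glueing $B\oplus B\oplus\cdots\oplus B$ ($k$ times) which by Proposition~\ref{prop:glueing} is completely mixable with $d^k$ rows and $d^k$ columns, and then observe that this $d^k$-column matrix's columns, grouped into $d^{k-1}$ groups, each reduce (after adding equal constants, which preserves complete mixability of the group as a single ``virtual column'') to one of $d$ distinct consecutive-integers columns — thereby exhibiting a $d$-column completely mixable matrix whose columns are permutations of $(1,\dots,d^k)^\top$ that is, up to row permutation, $A$. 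I expect this last ``column-grouping'' bookkeeping to be the only genuinely fiddly part; the arithmetic for the value of $a_d(k)$ is immediate.
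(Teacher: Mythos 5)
Your proposal is correct, and in the inductive step it diverges from the paper in a way that is worth spelling out. The paper's own proof proceeds by induction exactly as you do (same cyclic Latin-square base case, same idea of combining $A'$ with the $d\times d$ matrix $B=d^k\cdot(\text{cyclic shifts of }(0,\dots,d-1))$), but it literally invokes the glueing $A'\oplus B$ of Proposition~\ref{prop:glueing} and asserts the result has row sum $a_d(k)+\sum_{0\leq i<d}i\cdot d^k$. As you correctly notice, that operation as defined yields a $d^{k+1}\times d^2$ matrix with row sum $d\cdot a_d(k)+d\cdot d^k\tfrac{d(d-1)}{2}$, so the column count and the stated row sum do not match; the paper silently uses the ``folded'' variant in which the entry $A'_{ij}$ is replaced only by the column vector $(A'_{ij}+B_{1j},\dots,A'_{ij}+B_{dj})^\top$, keeping $d$ columns. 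Your mixed-radix construction --- indexing rows by base-$d$ digit strings and letting column $j$ shift each digit by a fixed residue so that, for every digit position $t$, the values $c^{(j)}_t$ over $j=1,\dots,d$ exhaust $\{0,\dots,d-1\}$ --- is precisely the closed form of this iterated folded glueing, and your two checks (per-column bijectivity from per-digit bijectivity, constant row sums $\sum_t\binom{d}{2}d^{t-1}$ digit by digit) are exactly what is needed. What your route buys is a non-inductive, fully explicit permutation and an argument that does not lean on a proposition that does not literally apply; what the paper's route buys is brevity, at the cost of the gap you identified. Your reduction of the value formula to Observation~\ref{obs:rowsum} plus the identity $a_d(k)=\tfrac{d(d^k+1)}{2}$ is also sound (and arguably cleaner than recomputing the row sum inside the induction). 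The only caveat: your second, alternative route via the genuine $k$-fold glueing $B\oplus\cdots\oplus B$ followed by column grouping is the one place where the bookkeeping is not fully carried out, but since your primary construction is complete, nothing rests on it.
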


\begin{proof}
  For $k=1$ the matrix $A=
  \begin{pmatrix}
    1     & 2      &\dots&d\\
    \vdots& \vdots &\iddots&\colon\\
    d-1   & d      &     &d-2\\
    d     & 1      &\dots&d-1
  \end{pmatrix}$ is a permutation that shows that the $(d,d)$-complete consecutive
  integers matrix is completely mixable with uniform row
  sum $\sum_{i=0}^d
  i=d+\sum_{i=0}^{d-1}i=d+\sum_{i=0}^{d-1}i\sum_{j=1}^1d^0=a_d(1)$.

  Assume that the statement holds for $k\in\Z_{\geq0}$, i.e.~that a
  $(d^k,d)$-complete consecutive integers matrix $A$ of size
  $d^k\times d$ has been reordered into a matrix $A'$ with identical
  row sums $a_d(k)$.  We will use $A'$ to construct a matrix $A''$
  with $d^{k+1}$ rows that is a reordering of the
  $(d^{k+1},d)$-complete consecutive integers matrix of size $d^{k+1}$
  and has row sums $a_d(k+1)$: We use the glueing operation of
  Proposition~\ref{prop:glueing} between $A'$ and $B= d^k\begin{pmatrix}
    0      & 1       & \dots   & d-1\\
    1      & 2       & \iddots & 0  \\
    \vdots & \iddots & \iddots & \vdots\\
    d-1 & 0 & \dots & d-2
     \end{pmatrix}$ (which has constant row sum $d^k
     \tfrac{d(d-1)}{2}$), to obtain $A''=A'\oplus B$, which has row
     sum $a_d(k)+\sum_{0\leq i<d}i\cdot d^k=a_d(k+1)$.
\end{proof}


\begin{corollary}
  Let $A\in\Z^{N\times d}_{\geq0}$ be a $(N,d)$-complete consecutive integers
  matrix. Then
  $$a_d(\lfloor\log_d(N)\rfloor) \leq \beta(A)\leq
  \gamma(A)\leq a_d(\lceil\log_d(N)\rceil).$$
  
  In particular, by underestimating $\beta(A)$ as
  $a_d(\lfloor\log_d(N)\rfloor)$ and overestimating $\gamma(A)$ as
  $a_d(\lceil\log_d(N)\rceil)$ we make an additive error of at
  most $\sum_{i=0}^{d-1}i\cdot d^{\lceil\log_d(N)\rceil-1}$ (which
  is roughly $\tfrac{d^2N}{2}$).
\end{corollary}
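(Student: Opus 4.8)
\medskip\noindent\emph{Proof idea.}
The plan is to reduce everything to the preceding theorem by establishing two elementary monotonicity properties in the row parameter (for fixed $d$), together with the trivial estimate $\beta(A)\le\gamma(A)$. The latter holds for any matrix: for every fixed $\Pi$ the minimum row sum is at most the average row sum $\tfrac1N\sum_{i,j}A_{ij}$, which in turn is at most the maximum row sum, so taking $\max_\Pi$ on the low side and $\min_\Pi$ on the high side gives $\beta(A)\le\tfrac1N\sum_{i,j}A_{ij}\le\gamma(A)$ (this is the reasoning behind Observation~\ref{obs:rowsum}). Since $\gamma$ and $\beta$ depend only on the multiset of entries in each column, I may write $A_M$ for \emph{any} $(M,d)$-complete consecutive integers matrix and identify $A$ with $A_N$. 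It is convenient to extend the notation $a_d(\cdot)$ of the preceding theorem to $k=0$ by $a_d(0)=d$; the $(1,d)$-matrix is a single row of ones, trivially completely mixable with row sum $d=a_d(0)$, so the theorem's conclusion holds for all $k\ge0$.

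\emph{Monotonicity of $\gamma$.} I claim $\gamma(A_M)\le\gamma(A_{M+1})$. Start from an arrangement $A_{M+1}^{\Pi}$ attaining $\gamma(A_{M+1})$; relabelling rows (which only reorders the row sums) I may assume the entry $M+1$ of the first column lies in the last row, whose other entries I call $x_2,\dots,x_d$. Delete this last row, and in every column $j\ge2$ with $x_j<M+1$ replace the (now unique) entry equal to $M+1$ of that column by $x_j$. Each column then becomes a permutation of $\{1,\dots,M\}$, so this is a legitimate arrangement of $A_M$; as we only removed one row and lowered some entries, no surviving row sum increased, hence $\gamma(A_M)\le\gamma(A_{M+1})$.

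\emph{Monotonicity of $\beta$.} I claim $\beta(A_M)\le\beta(A_{M+1})$. Take an arrangement of $A_M$ attaining $\beta(A_M)$ and append one extra row equal to $M+1$ in every column; every column becomes a permutation of $\{1,\dots,M+1\}$, the old row sums are unchanged, and the new row has sum $d(M+1)$, which strictly exceeds all of them. Hence the minimum row sum is unchanged and $\beta(A_{M+1})\ge\beta(A_M)$. Iterating the two monotonicities and invoking the (extended) preceding theorem at the two powers of $d$ bracketing $N$ yields $\beta(A)\ge\beta\bigl(A_{d^{\lfloor\log_d N\rfloor}}\bigr)=a_d(\lfloor\log_d N\rfloor)$ and $\gamma(A)\le\gamma\bigl(A_{d^{\lceil\log_d N\rceil}}\bigr)=a_d(\lceil\log_d N\rceil)$, which together with $\beta(A)\le\gamma(A)$ is the asserted chain.

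For the ``in particular'' part one evaluates the closed form to $a_d(k)=\tfrac{d(d^{k}+1)}{2}$, so with $k=\lfloor\log_d N\rfloor$ the difference $a_d(\lceil\log_d N\rceil)-a_d(\lfloor\log_d N\rfloor)$ vanishes when $N$ is a power of $d$ and otherwise equals $\tfrac{d^{k+1}(d-1)}{2}=\sum_{i=0}^{d-1}i\cdot d^{\lceil\log_d N\rceil-1}$; since $d^{k+1}\le dN$ this is at most about $\tfrac{d^2N}{2}$. The only step requiring genuine care is the column-repair construction in the $\gamma$-monotonicity argument: one must check that after deleting a row and lowering the leftover ``$M+1$''-entries every column is exactly a permutation of $\{1,\dots,M\}$ and that no surviving row sum rises. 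Beyond this bookkeeping I expect no real obstacle.
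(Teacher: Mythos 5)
Your proof is correct. The paper states this corollary without proof, so there is nothing to compare against; your argument supplies exactly the justification that is left implicit, namely the sandwich between the two bracketing powers of $d$ via the two monotonicity claims $\gamma(A_M)\le\gamma(A_{M+1})$ and $\beta(A_M)\le\beta(A_{M+1})$. The only genuinely non-obvious step is the one you flag yourself --- the delete-a-row-and-repair construction showing $\gamma(A_M)\le\gamma(A_{M+1})$ --- and your bookkeeping there checks out: after removing the row containing the $M{+}1$ of column $1$ and replacing each leftover $M{+}1$ in column $j$ by the deleted value $x_j<M{+}1$, every column is a permutation of $\{1,\dots,M\}$ and no surviving row sum increases. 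The closed form $a_d(k)=\tfrac{d(d^k+1)}{2}$ and the resulting error bound $\tfrac{d^{k+1}(d-1)}{2}=\sum_{i=0}^{d-1}i\cdot d^{\lceil\log_d N\rceil-1}\lesssim\tfrac{d^2N}{2}$ are also computed correctly, and extending $a_d$ to $k=0$ properly covers the case $N<d$, which the theorem's hypothesis $k>0$ would otherwise leave open.
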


\begin{lemma}\label{lem:swapping-bad}
  Let $A$ be a $(N,3)$-complete consecutive integers matrix where all
  permutations are the identity. Then the swapping algorithm will
  terminate after one reordering step with a matrix with row sums in the
  range of $[N+2,\dots,2N+1]$. In particular, if $N=3^k$ and $A$ is
  hence completely mixable the solution is has additive error $\Oh(N)$.
\end{lemma}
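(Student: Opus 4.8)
\emph{Proof proposal.} The plan is to trace Algorithm~\ref{alg:swapping} on this instance by hand and then compare the outcome with the value of $\gamma(A)=\beta(A)$ known for $N=3^k$. Write $a=(1,\dots,N)^\top$, so that initially $A=(a\ a\ a)$, row $i$ has sum $3i$, and the row sums form $\{3,6,\dots,3N\}$. First I would check that the while-loop fires. By symmetry of the three columns take $j=3$: the row sums of $\dropped{A}{3}=(a\ a)$ are $x=(2,4,\dots,2N)^\top$, which is strictly increasing and hence identically, not oppositely, ordered with $A_{\cdot 3}=a$, so $x\notantisorted A_{\cdot 3}$. Thus one reordering step of Lemma~\ref{lem:swapping-algorithm} applies, permuting column $3$ into opposite order with $x$; since $x$ has distinct entries this forces $A_{\cdot 3}$ to become the reversed column $(N,N-1,\dots,1)^\top$.

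Call the resulting matrix $A'$. Its $i$-th row is $(i,\,i,\,N+1-i)$ with row sum $N+1+i$, so the row sums of $A'$ are exactly $\{N+2,N+3,\dots,2N+1\}$, i.e.\ they lie in $[N+2,2N+1]$ as claimed. It remains to check termination, i.e.\ that no column of $A'$ is out of opposite order with the sum of the other two. For $j=3$ the companion vector is again $x=(2,4,\dots,2N)^\top$, strictly increasing, while $A'_{\cdot 3}=(N,\dots,1)^\top$ is strictly decreasing, so the two are oppositely ordered. For $j=1$, and symmetrically $j=2$, the two remaining columns are $a$ and its reverse, whose componentwise sum is the constant vector $(N+1,\dots,N+1)^\top$; a constant vector is oppositely ordered with every vector (sort the other vector decreasingly, the constant one being trivially weakly increasing). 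Hence no column triggers a further step and the loop halts right after the single reordering.

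Finally, for $N=3^k$ the matrix is completely mixable by the preceding theorem, with common row sum $\gamma(A)=\beta(A)=\tfrac1N\cdot 3\cdot\tfrac{N(N+1)}{2}=\tfrac{3(N+1)}{2}$ (an integer, as $3^k$ is odd). The algorithm, however, stops at $A'$ and thereby certifies only the upper bound $2N+1$ for $\gamma(A)$ and the lower bound $N+2$ for $\beta(A)$; each of these is off by $\tfrac{N-1}{2}=\Oh(N)$, and the spread it reports is $(2N+1)-(N+2)=N-1=\Oh(N)$ whereas $\gamma(A)-\beta(A)=0$. The computation is otherwise entirely routine: the only points needing a word of care are the termination check — specifically the (negated) instance of $\notantisorted$ for a constant companion vector, which is exactly what kills any further progress on columns $1$ and $2$ — and pinning down that ``one reordering step'' means permuting a single column into opposite order with the current sum of the rest (as in Lemma~\ref{lem:swapping-algorithm}); the finer single-transposition loop literally written in Algorithm~\ref{alg:swapping} only refines this and, by the column symmetry, reaches a matrix of the same form with row sums still in $[N+2,2N+1]$. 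I expect that second, essentially semantic, point to be the main thing a referee would want made explicit.
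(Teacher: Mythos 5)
Your proof is correct and follows essentially the same route as the paper: run one reordering step to reverse a single column, read off the row sums $N+2,\dots,2N+1$, and compare with the common row sum $\tfrac{3(N+1)}{2}=a_3(k)$ for $N=3^k$. You are in fact more careful than the paper's proof, which merely asserts termination, whereas you verify it explicitly via the observation that the constant companion vector $(N+1,\dots,N+1)^\top$ is oppositely ordered with every column.
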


\begin{proof}
  Starting with $A=
    \begin{pmatrix}
      1      &1 & 1\\
      \vdots &\vdots&\vdots\\
      N      &N & N
    \end{pmatrix} $ the swapping algorithm will invert the order of
    the first column to obtain
    $A'=
    \begin{pmatrix}
      N      & 1     & 1\\
      \vdots &\vdots &\vdots\\
      1      & N     & N
    \end{pmatrix}$.
    This matrix satisfies the rule that each column is sorted
    anti-monotonously wrt. the sums of the other two columns, so the
    algorithm stops. The row sums are $N+2,N+3,\dots,2N,2N+1$.

    Since for $N=3^k$ we know that there exists a reordering of $A$
    such that all row sums are $3+\sum_{i=1}^k3^i$ this shows an
    approximation error of at least $\Oh(N)$.
\end{proof}

\section{Matrices with restricted domain}
\label{sec:matr-with-restr}

Matrices of consecutive integer entries are just a special case of
matrices where all columns contain the same multiset of entries
$M=\{v_1,\dots,v_m\}$. If the number of different entries in $M$ is
fixed, these matrices yield tractable instances for variable $d$, much
like an $N$-fold system.

\begin{lemma}
  Let $A\in\Z^{m\times d}$ such that the entries of each column come
  from the same multiset $M=\{a_1,\dots,a_m\}$, and assume $m$ is
  fixed. Then $\gamma(A)$ can be computed in polynomial time.
\end{lemma}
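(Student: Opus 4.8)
The plan is to reduce the problem to integer linear programming in fixed dimension. The key observation is that, since every column of $A$ is a rearrangement of the same multiset $M=\{a_1,\dots,a_m\}$, a permuted column $A^\Pi_{\cdot j}$ is completely described by the way the $m$ values of $M$ are assigned to the $m$ rows; call this the \emph{pattern} of the column. There are at most $m!$ distinct patterns (fewer when $M$ has repeated entries, but $m!$ is a fixed bound since $m$ is fixed); enumerate them as vectors $t^{(1)},\dots,t^{(K)}\in\Z^m$ with $K\le m!$.

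Next I would observe that the $i$-th row sum of $A^\Pi$ equals $\sum_{\ell=1}^{K} n_\ell\, t^{(\ell)}_i$, where $n_\ell$ is the number of columns whose permuted pattern is $t^{(\ell)}$; in particular it depends on $\Pi$ only through the count vector $n=(n_1,\dots,n_K)$. Conversely, every $n\in\Z_{\ge0}^K$ with $\sum_\ell n_\ell=d$ is realised by some $\Pi\in\Symm(m)^d$ (assign patterns to columns in any order). Since the objective $\max_{1\le i\le m}\sum_{j}A^\Pi_{i,j}$ is symmetric in the columns, nothing is lost by recording only $n$, and therefore
$$
\gamma(A)=\min\Bigl\{\,\max_{1\le i\le m}\sum_{\ell=1}^{K} n_\ell\, t^{(\ell)}_i \;:\; \sum_{\ell=1}^{K} n_\ell=d,\ n\in\Z_{\ge0}^K\,\Bigr\}.
$$

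Carrying this out: the list of patterns is computed in $O(m!\cdot m)$ time, independent of $d$. The displayed optimization, after introducing a variable $z$ and the constraints $\sum_\ell n_\ell t^{(\ell)}_i\le z$ for $1\le i\le m$, is an integer program with a fixed number $K\le m!$ of integer variables and $m+1$ constraints, whose coefficients are bounded in absolute value by $\max(d,\max_i|a_i|)$. By Lenstra's algorithm for integer programming in fixed dimension it is solvable in time polynomial in the input size; equivalently, one may binary-search $z$ (which lies between $\tfrac{d}{m}\sum_i a_i$ and $d\max_i a_i$, i.e.\ in a range of polynomially many bits) and test integer feasibility of the remaining fixed-dimension system for each candidate. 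Either route yields $\gamma(A)$ in polynomial time, and $\beta(A)$ follows via Observation~\ref{obs:beta-and-gamma}.

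There is no serious obstacle here beyond the reduction itself; the one point requiring care is the exactness of passing from $\Pi$ to the count vector $n$, which is immediate from column-symmetry of the row sums. I would also note a self-contained alternative avoiding Lenstra's theorem: the number of count vectors is $\binom{d+K-1}{K-1}=O(d^{K-1})$, which is polynomial in $d$ for fixed $K$ (and $d$ is itself bounded by the size of $A$), so one can simply enumerate all of them, evaluate the $m$ row sums for each, and return the smallest maximum.
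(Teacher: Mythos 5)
Your proposal is correct and follows essentially the same route as the paper: both reduce the problem to choosing how many columns receive each of the (fixed number of) distinct rearrangements of $M$, and then solve the resulting integer program in fixed dimension via Lenstra's algorithm. Your added remarks (the explicit justification that only the count vector matters, and the alternative of directly enumerating the $O(d^{K-1})$ count vectors) are sound refinements but do not change the argument.
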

\begin{proof}
  Since the multiset $M$ is fixed,
  there are only a fixed number of
  different ways to rearrange a column by permutations. For each of
  these $k$ arrangements of the set $M$ denote the 
    permutation by $\pi_l$, $1\leq l\leq k$.  Then 
    $$
    \begin{array}{rl}
          \min \Gamma&\\
          \begin{pmatrix}
            v_{\pi_1(1)}\\
            \vdots\\
            v_{\pi_1(m)}
          \end{pmatrix} x_{\pi_1}
          + \dots +
          \begin{pmatrix}
            v_{\pi_{k}(1)}\\
            \vdots\\
            v_{\pi_{k}(m)} 
          \end{pmatrix} x_{\pi_{k}}
          &\leq
          \begin{pmatrix}
            \Gamma\\
            \vdots\\
            \Gamma
          \end{pmatrix}\\
          \sum_{l=1}^k x_{\pi_{l}} &= d\\
          x_{\pi_{l}}\in \Z_{\geq0}&\text{ for $1\leq l\leq k$}
        \end{array}
        $$
        is an integer programming problem in fixed dimension $k$,
        modeling that we have to choose $d$ rearrangements of the set
        $M$ (one for each column of $A$) that can be solved in
        polynomial time~\cite{Lenstra83}.
\end{proof}

Instead of instances with the same multiset of values in every column we
can also consider instances where all matrix entries come from a fixed
set of values, generalizing the two-value case of
Lemma~\ref{lem:01cm}. 
\begin{theorem}\label{lem:fixed-M-d}
  Let $M=\{v_1,\dots,v_s\}\subseteq\R$ be a fixed set of values and $A\in M^{m\times
    d}$. For every fixed number of columns $d$ one can compute
  $\gamma(A)$ in polynomial time. 
\end{theorem}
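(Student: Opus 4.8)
The plan is to reduce, via a counting/profile argument, the problem to integer programming in fixed dimension and then invoke Lenstra's algorithm~\cite{Lenstra83}, exactly as in the preceding lemma. Since every entry of $A$ lies in the fixed set $M=\{v_1,\dots,v_s\}$, the relevant data of column $j$ is not the order of its entries but its \emph{profile} $p^j=(p^j_1,\dots,p^j_s)\in\Z_{\geq0}^s$, where $p^j_t$ counts how often $v_t$ occurs in column $j$; these profiles are fixed by the input and satisfy $\sum_t p^j_t=m$. A choice of permutations $\Pi$ amounts, row by row, to deciding for each row $i$ which value of $M$ column $j$ contributes; equivalently, it is a decomposition of each column profile $p^j$ into $m$ unit vectors assigned to the $m$ rows. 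The key observation is that two rows that receive the same multiset of contributed values are interchangeable, so what matters is only \emph{how many rows} end up with each possible ``row type''. A row type is a vector $q=(q_1,\dots,q_s)\in\Z_{\geq0}^s$ with $\sum_t q_t=d$ recording how many of the $d$ columns contribute each value $v_t$ to that row; the number of such types is $\binom{d+s-1}{s-1}$, a constant since $d$ and $s$ are fixed. Its row sum is the fixed number $\langle q,v\rangle=\sum_t q_t v_t$.

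With this in hand I would set up an ILP with one nonnegative integer variable $y_q$ for each row type $q$, meaning ``the number of rows realizing type $q$''. The constraints are: $\sum_q y_q=m$ (every row gets a type); for each column $j$ and each value index $t$, $\sum_q q_t\,[\text{$q$ uses value $v_t$}]\;\cdot(\text{number of columns assigned type-coordinate})$ — more precisely, we must also track, for each column, how its single column is distributed, which is not directly a function of the $y_q$ alone. To handle this cleanly I would instead use variables $z_{j,t}$ counting how many rows receive value $v_t$ from column $j$; feasibility of the column constraint is $z_{j,t}=p^j_t$ for all $j,t$ (fixed!), and the link to row types is $\sum_q q_t\,y_q=\sum_j z_{j,t}=\sum_j p^j_t$ for each $t$ — a constant right-hand side. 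One must still check that any nonnegative integer solution $(y_q)$ to $\sum_q y_q=m$ and $\sum_q q_t y_q=\sum_j p^j_t$ $(1\le t\le s)$ can be realized by actual permutations: this is a bipartite degree-sequence / transportation feasibility question (rows with prescribed type-demands, columns with prescribed value-supplies $p^j$), and by the integral flow / Gale--Ryser type argument it is always realizable once the aggregate marginals match, since each column's contribution to the rows can be routed independently. Finally, to compute $\gamma(A)$, add the objective: minimize $\Gamma$ subject to $\Gamma\ge\langle q,v\rangle$ for every type $q$ with $y_q\ge 1$. The ``$y_q\ge1\Rightarrow\Gamma\ge\langle q,v\rangle$'' implication is not linear, but since there are only constantly many types we can enumerate over the set $T$ of ``allowed'' types (those with $\langle q,v\rangle\le\Gamma$), i.e.\ sort the finitely many values $\langle q,v\rangle$ and, for each threshold, solve the feasibility ILP with $y_q=0$ forced for all $q\notin T$; the smallest threshold that stays feasible is $\gamma(A)$. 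Each such ILP has $\binom{d+s-1}{s-1}$ variables, a constant, so it is solvable in polynomial time by Lenstra's theorem~\cite{Lenstra83}, and there are only constantly many thresholds to try.

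The main obstacle I anticipate is the realizability step: verifying that matching the aggregate type-marginals $\sum_q q_t y_q=\sum_j p^j_t$ is not merely necessary but sufficient for the existence of permutations $\pi_1,\dots,\pi_d$ producing exactly the row-type multiset $(y_q)$. This is a question about whether a nonnegative integer matrix with prescribed column sums $p^j$ and prescribed ``row type counts'' exists; one convenient route is to build it greedily column by column, at each step distributing the $p^j_t$ copies of $v_t$ among the rows in a way consistent with the still-unfilled type coordinates, and to argue no dead end occurs — essentially an exchange argument, or an appeal to the integrality of the transportation polytope after observing the constraint system is an interval (network) matrix. Once that lemma is in place, the rest is the by-now-routine packaging into a fixed-dimension ILP plus~\cite{Lenstra83}.
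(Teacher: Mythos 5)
Your overall strategy (profile the rows, set up an ILP in constantly many variables, invoke Lenstra) is the right one and matches the paper's, but the specific relaxation you propose is incorrect, and the gap is exactly the one you flagged yourself: the realizability lemma you hope for is false. Matching the \emph{aggregate} type-marginals $\sum_q q_t y_q=\sum_j p^j_t$ is necessary but not sufficient for a multiset of unordered row types to be realizable by column permutations; the per-column supplies $p^j$ impose genuine additional constraints (for $s=2$ this is already visible from Gale--Ryser: equal totals do not imply a $0$-$1$ matrix with prescribed row and column sums exists). Worse, since the spurious solutions enlarge the feasible region of a minimization, your ILP can return a value strictly below $\gamma(A)$. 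Concretely, take $M=\{0,1,2\}$, $d=2$, $m=2$, and
$A=\left(\begin{smallmatrix}0&1\\2&1\end{smallmatrix}\right)$.
Whatever permutations are applied, the rows are $\{0+1,\,2+1\}$, so $\gamma(A)=3$. But the type assignment $y_{(1,0,1)}=1$, $y_{(0,2,0)}=1$ (one row ``gets a $0$ and a $2$'', one row ``gets two $1$s'') satisfies $\sum_q y_q=2$ and all three aggregate marginal equations ($1$ zero, $2$ ones, $1$ two), and both types have row sum $2$; your ILP therefore reports $2<3$. The second column consists of two copies of the value $1$, so every row must receive a $1$ from it --- information your aggregated constraints discard. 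Your threshold-enumeration trick for the minimax objective is fine; it is not the source of the problem.

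The paper avoids this by working with \emph{ordered} row patterns $r_1,\dots,r_{s^d}\in M^d$ (which value appears in which column), with multiplicity variables $q_k$ and the per-column constraints $\sum_k u^k_{ij}q_k=o_{ij}$, where $o_{ij}$ is the number of occurrences of $v_i$ in column $j$ of $A$. With ordered patterns realizability is immediate: for each column $j$ and value $v_i$, the constraint says exactly $o_{ij}$ of the chosen rows demand $v_i$ in position $j$, so the entries of column $j$ can be distributed to those rows by an arbitrary bijection. The count $s^d$ is still a constant for fixed $s$ and $d$, so nothing is gained by compressing to the $\binom{d+s-1}{s-1}$ unordered types, and the compression is precisely what loses the per-column information. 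If you want to salvage your formulation you would need additional variables coupling types to columns (e.g.\ $w_{q,j,t}$ counting rows of type $q$ that receive $v_t$ from column $j$) together with a realizability argument for that finer system --- at which point you have essentially re-derived the ordered-pattern formulation.
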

\begin{proof}
  If $M$ is fixed then for fixed $d$ there are at most $s^d$ possible
  row vectors  $r_1,\dots,r_{s^d}$ composed of values from $M$.
  We define the binary value $u_{ij}^k$
  to be $1$ if and only if $(r_k)_j=v_i$, i.e.~if in row vector $k$
  the value $v_i$ appears in the $j$-th column.

  For a given matrix $A\in M^{m\times d}$ we can count the number of
  occurences of value $v_i$ in column $j$ in polynomial time. Denote
  these values by $o_{ij}$. 

  Introduce binary variables $p_1,\dots,p_{s^d}$ to indicate whether
  pattern $k$ occurs in the permuted version of $A$, and integer
  variables $q_1,\dots,q_{s^d}$ counting how often it appears.
  Then the following integer program in fixed dimension $s^d$ can be
  used compute $\gamma(A)$:
  $$
  \begin{array}{rl@{\qquad\qquad}l}
    \min \Gamma&\\
    (\sum_{j=1}^d (r_k)_j) p_k &\leq \Gamma&\text{for all $k$}\\
    \sum_{k=1}^{s^d} p_k&\leq m\\
    p_i&\leq q_i                         &\text{for all $k$}\\
    q_i&\leq m p_i                       &\text{for all $k$}\\
    \sum_{k=1}^{s^d} u_{ij}^k q_k &= o_{ij} &\text{for all $i,j$}\\
    &p_i\in\{0,1\}^{s^d},q_i\in Z^{s^d}
  \end{array}
  $$
\end{proof}

\begin{corollary}
  There exists a polynomial approximation scheme for every fixed $d$
  to compute $\gamma(A)$ for $A\in\R^{m\times d}\geq 0$ with multiplicative
  error $(1+\epsilon)$ for every $\epsilon>0$.
\end{corollary}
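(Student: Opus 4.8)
The plan is to reduce to Theorem~\ref{lem:fixed-M-d} by replacing $A$ with a matrix whose entries come from a \emph{fixed} set $M$ of constant cardinality, at the cost of only a multiplicative $(1+\epsilon)$ distortion of $\gamma$. The reduction uses two rounding steps: a \emph{thresholding} step discarding entries that are negligibly small compared to $\gamma(A)$, and a \emph{geometric} rounding step collapsing the surviving entries onto a constant number of values.

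First I would record the elementary inequality $\gamma(A)\ge a^{*}:=\max_{i,j}A_{ij}$: in any rearrangement the largest entry sits in some row alongside $d-1$ nonnegative entries, so that row sum is at least $a^{*}$, hence so is $\gamma(A)$. Now fix $\epsilon>0$ and pick a rational $\delta\in(0,1)$ with $\tfrac{1+\delta}{1-\delta}\le 1+\epsilon$ (possible since the left side tends to $1$ as $\delta\to0$). Put the threshold $\tau=\delta a^{*}/d$ and define $\hat A$ by $\hat A_{ij}=0$ if $A_{ij}<\tau$, and otherwise $\hat A_{ij}=\tau(1+\delta)^{k}$ with $k$ the least nonnegative integer for which $\tau(1+\delta)^{k}\ge A_{ij}$. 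Every surviving positive entry lies in $[\tau,a^{*}]$, so $k$ ranges over $\{0,1,\dots,K\}$ with $K=\lceil\log_{1+\delta}(d/\delta)\rceil$; thus the rescaled matrix $B:=\hat A/\tau$ has all its entries in the fixed rational set $M=\{0,1,(1+\delta),\dots,(1+\delta)^{K}\}$, of cardinality $s=K+2$. The key point is that $s$ depends only on $\epsilon$ and $d$ --- not on $m$ and not on the magnitudes of the entries --- so Theorem~\ref{lem:fixed-M-d} applies: its integer program has the fixed dimension $s^{d}$, Lenstra's algorithm solves it in time polynomial in $m$, and $\gamma(\hat A)=\tau\,\gamma(B)$ is obtained exactly.

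For the error bound, one checks (splitting on $A_{ij}<\tau$ versus $A_{ij}\ge\tau$) that $A_{ij}-\tau\le \hat A_{ij}\le(1+\delta)A_{ij}$ for all $i,j$. Both estimates pass through row sums, so for every rearrangement $\Pi$ and every row, $\sum_{j}A^{\Pi}_{ij}-d\tau\le\sum_{j}\hat A^{\Pi}_{ij}\le(1+\delta)\sum_{j}A^{\Pi}_{ij}$; taking the maximum over rows and then the minimum over $\Pi$ yields $\gamma(A)-d\tau\le\gamma(\hat A)\le(1+\delta)\gamma(A)$. Since $d\tau=\delta a^{*}\le\delta\gamma(A)$ this becomes $(1-\delta)\gamma(A)\le\gamma(\hat A)\le(1+\delta)\gamma(A)$, so returning $\tilde\gamma:=\gamma(\hat A)/(1-\delta)$ gives $\gamma(A)\le\tilde\gamma\le\tfrac{1+\delta}{1-\delta}\gamma(A)\le(1+\epsilon)\gamma(A)$. (If an explicit rearrangement is wanted, the optimizer $\Pi^{*}$ of $\gamma(\hat A)$, read off on $A$, has maximum row sum at most $\gamma(\hat A)+d\tau\le(1+2\delta)\gamma(A)$.) Ranging over all $\epsilon$ this produces one polynomial-time algorithm per $\epsilon$, which is exactly the claimed PTAS.

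The only genuinely delicate choice is the threshold $\tau$. Thresholding introduces an \emph{additive} error of $d\tau$, which turns into a harmless multiplicative $O(\delta)$ term only because $\gamma(A)\ge a^{*}$; at the same time it is precisely what bounds the number of value-buckets (hence the integer-program dimension $s^{d}$) by a constant rather than by something growing with $\log a^{*}$ --- and without a constant dimension Lenstra's theorem no longer runs in polynomial time. Everything else (computing $a^{*}$, forming $\hat A$, counting the column occurrences $o_{ij}$) is routine and polynomial.
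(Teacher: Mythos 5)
Your proof is correct and follows essentially the same route as the paper: discretize the entries onto a value set of constant cardinality, solve the rounded instance exactly via Theorem~\ref{lem:fixed-M-d}, and use $a^*\le\gamma(A)$ to convert the rounding error into a multiplicative $(1+\epsilon)$ factor. The only difference is cosmetic: the paper rounds up to a uniform grid of width $\epsilon a^*/d$ (one-sided additive error at most $\epsilon a^*\le\epsilon\gamma(A)$, no correction factor needed), whereas you use thresholding plus geometric rounding --- more machinery than necessary here, but equally valid.
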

\begin{proof}
  Define a grid of width $\epsilon \tfrac{a^*}{d}$ where $a^*$ is the
  largest entry of $A$. Consider the set $M=\{0,\epsilon
  \tfrac{a^*}{d},2\epsilon
  \tfrac{a^*}{d},\dots,\lceil\tfrac{d}{\epsilon}\rceil\epsilon
  \tfrac{a^*}{d}\}$ and round the entries of $A$ up to next value in
  $M$ to obtain an approximating instance $\bar{A}$. Then by
  Lemma~\ref{lem:fixed-M-d} the approximating instance can be solved
  in polynomial time since $M$ has $\lceil\tfrac{d}{\epsilon}\rceil+1$
  entries, a number only depending on the fixed $d$ and $\epsilon$.
  The objective value of the approximate solution is at most
  $d\epsilon \tfrac{a^*}{d}\leq\epsilon \gamma(A)$ larger than
  $\gamma(A)$, since $a^*\leq\gamma(A)$, yielding a
  $(1+\epsilon)$-approximation.
\end{proof}

\section*{Acknowledgments}

The author wants to thank Giovanni Puccetti for bringing the question
to his attention, and David Adjiashvili, Robert Weismantel and Sandro
Bosio for helpful discussions.\\
Part of this research was supported by EU-FP7-PEOPLE project 289581 `NPlast'.

\section*{References}
\label{sec:references}

\bibliographystyle{elsarticle-num} 
\bibliography{weismantel,finance}


  
  
\end{document}